\newlength{\defbaselineskip}
\newcommand{\setlinespacing}[1]%
{\setlength{\baselineskip}{#1 \defbaselineskip}}
\numberwithin{equation}{section}
\newtheorem{thm}{Theorem}[section]
\newtheorem{cor}[thm]{Corollary}
\newtheorem{lem}[thm]{Lemma}
\newtheorem{prop}[thm]{Proposition}
\theoremstyle{definition}
\theoremstyle{remark}
\newtheorem{rem}[thm]{Remark}
\numberwithin{equation}{section}
\begin{document}

\title[Strichartz and uniform Sobolev inequalities]
{Strichartz and uniform Sobolev inequalities \\ for the elastic wave equation}

\author{Seongyeon Kim, Yehyun Kwon, Sanghyuk Lee and Ihyeok Seo}

\thanks{This work was supported by a KIAS Individual Grant (MG082901) at Korea Institute for Advanced Study (Kim), a KIAS Individual Grant (MG073702) at Korea Institute for Advanced Study and NRF-2020R1F1A1A01073520 (Kwon), NRF-2021R1A2B5B02001786 (Lee) and NRF-2022R1A2C1011312 (Seo).}

\subjclass[2020]{Primary: 35B45; Secondary: 35L05}
\keywords{Dispersive estimate, Strichartz estimate,  uniform Sobolev inequality, elastic wave equation}

\address{School of Mathematics, Korea Institute for Advanced Study, Seoul 02455, Republic of Korea}
\email{synkim@kias.re.kr}
\email{yhkwon@kias.re.kr}

\address{Department of Mathematical Sciences and RIM, Seoul National University, Seoul 08826, Republic of Korea}
\email{shklee@snu.ac.kr}

\address{Department of Mathematics, Sungkyunkwan University, Suwon 16419, Republic of Korea}
\email{ihseo@skku.edu}

\begin{abstract}
We prove dispersive estimate for the elastic wave equation by which we extend the known Strichartz estimates for the classical wave equation to those for the elastic wave equation. In particular, the endpoint Strichartz estimates are deduced.  For the purpose we diagonalize the symbols of the Lam\'e operator and its semigroup, which also gives an alternative and simpler proofs of the previous results on perturbed elastic wave equations. Furthermore, we obtain uniform Sobolev inequalities for the elastic wave operator.
\end{abstract} 

\maketitle

\section{Introduction}
Let $n\ge2$ and let $f, g\colon \mathbb{R}^n \rightarrow \mathbb{C}^n$ and $F\colon \mathbb R\times \mathbb R^{n} \rightarrow \mathbb C^n$ be vector fields. We consider the inhomogeneous elastic wave equation
\begin{equation}\label{e:elastic}
\begin{cases}
	(\partial_t^2 -\Delta^\ast) u(t,x) = F(t,x) , \\ 
	u(0,x)=f(x), \quad \partial_t u(0,x)=g(x),
\end{cases}
\end{equation}
where $\Delta^\ast$ denotes the Lam\'e operator (acting in the spatial variable $x$) defined by
\begin{equation*} 
	\Delta^\ast u =\mu\Delta u +(\lambda + \mu)\nabla \mathrm{div} u,
\end{equation*}
and the Laplacian $\Delta$ acts on a vector field component-wise. The Lam\'e constants $\lambda,\mu\in\mathbb{R}$ satisfy the standard condition 
\begin{equation} \label{e:elliptic}
	\mu >0, \quad \lambda + 2\mu >0,
\end{equation}
which guarantees the ellipticity of $\Delta^\ast$. The equation has been used to model wave propagation in an elastic medium, where $u$ denotes the displacement field of the medium (see e.g., \cite{LL,MR1262126}).

\subsubsection*{Dispersive estimate and Strichartz estimate}
We begin by introducing the notations. For a vector-valued function $f = (f_1, \ldots ,f_n)\colon \mathbb R^n\to \mathbb C^n$ we use the norms
\begin{equation}\label{e:def1}
	\|f\|_{L^r(\mathbb R^n)} =
	\begin{cases}
		\big(\sum_{j=1}^n \|f_j\|_{L^r(\mathbb R^n)}^r\big)^{\frac1r}, & 1\le r<\infty, \\
		\max_{1\le j\le n} \|f_j\|_{L^\infty(\mathbb R^n)}, & r=\infty.
	\end{cases}
\end{equation}
For a time dependent vector field $u=(u_1, \ldots, u_n)\colon \mathbb R\times \mathbb R^n \to \mathbb C^n$ we denote 
\begin{equation*}
	\|u\|_{L_t^qL_x^r(\mathbb R\times \mathbb R^n)} = \big\| \| u(t,\cdot) \|_{L^r(\mathbb R^n)} \big\|_{L^q_t(\mathbb R)}.
\end{equation*} 
Besides, we use the standard notations $\|f\|_{\dot H^\sigma_r}=\||\nabla|^\sigma f\|_{L^r}$ and $\|f\|_{\dot H^\sigma}=\|f\|_{\dot H^\sigma_2}$. For $q,r\ge2$, $r\neq \infty$ and $(q,r,n)\neq(2,\infty, 3)$, we say $(q,r)$ is \emph{wave-admissible} if $\frac1q\le \frac{n-1}2(\frac12-\frac1r)$. We also call $(q,r)$  \emph{sharp wave-admissible} if $\frac1q= \frac{n-1}2(\frac12-\frac1r)$.

Our first result is the following dispersive estimate.
\begin{thm} \label{t:dispersive}
Suppose $\widehat f$ is supported in the annulus $\{\xi \in \mathbb R^n\colon 1/2 \le |\xi|\le2\}$. Then
\begin{equation}\label{e:dispers}
	\|e^{it\sqrt{-\Delta^\ast}}f\|_{L^\infty(\mathbb R^n)} \lesssim |t|^{-\frac{n-1}2}\|f\|_{L^1(\mathbb R^n)}.
\end{equation}
\end{thm}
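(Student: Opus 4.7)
The plan is to diagonalize the matrix Fourier symbol of $-\Delta^*$ so that $e^{it\sqrt{-\Delta^*}}$ splits into two scalar half-wave propagators with distinct speeds, after which the estimate reduces to a classical oscillatory-integral bound. The Fourier symbol of $-\Delta^*$ acting on vector fields is the Hermitian matrix
\begin{equation*}
m(\xi) = \mu|\xi|^2 I_n + (\lambda+\mu)\,\xi\xi^T,
\end{equation*}
and the orthogonal decomposition of $\mathbb{C}^n$ into $\mathbb{R}\xi$ and its perpendicular complement diagonalizes it. The eigenvalues are $c_p^2|\xi|^2$ and $c_s^2|\xi|^2$ with $c_p=\sqrt{\lambda+2\mu}$ and $c_s=\sqrt{\mu}$ (both positive by \eqref{e:elliptic}), and the associated spectral projections are the Helmholtz projections $\Pi_p(\xi)=|\xi|^{-2}\xi\xi^T$ and $\Pi_s(\xi)=I_n-\Pi_p(\xi)$, both smooth and $0$-homogeneous on $\mathbb{R}^n\setminus\{0\}$. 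By functional calculus,
\begin{equation*}
e^{it\sqrt{-\Delta^*}}f = e^{itc_p|D|}\Pi_p(D)f + e^{itc_s|D|}\Pi_s(D)f.
\end{equation*}

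I would then fix $\chi\in C_c^\infty(\mathbb{R}^n)$ supported in $\{1/4<|\xi|<4\}$ with $\chi\equiv 1$ on $\{1/2\le|\xi|\le 2\}$; since $\widehat f$ is supported in that annulus, each summand above equals a matrix convolution $K_t^\sharp * f$ whose kernel is
\begin{equation*}
K_t^\sharp(x) = \int_{\mathbb{R}^n} e^{i(x\cdot\xi + tc_\sharp|\xi|)}\,\chi(\xi)\,\Pi_\sharp(\xi)\,d\xi, \qquad \sharp\in\{p,s\}.
\end{equation*}
Because the annular cutoff $\chi$ removes the singularity of $\Pi_\sharp$ at $\xi=0$, the amplitude $\chi\Pi_\sharp$ is a smooth, compactly supported, matrix-valued symbol. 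With the vector-valued norm conventions in \eqref{e:def1}, a componentwise application of Young's inequality reduces \eqref{e:dispers} to the uniform entrywise bound
\begin{equation*}
\sup_{x\in\mathbb{R}^n}\bigl|(K_t^\sharp)_{jk}(x)\bigr| \lesssim |t|^{-(n-1)/2}, \qquad 1\le j,k\le n.
\end{equation*}

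This last inequality is the classical stationary-phase estimate for the half-wave kernel, applied entrywise. For $|t|\le 1$ it is trivial since the amplitude is integrable. For $|t|\ge 1$, the phase $\Phi_\sharp(\xi)=x\cdot\xi + tc_\sharp|\xi|$ has gradient $x+tc_\sharp\,\xi/|\xi|$, which vanishes on $\mathrm{supp}\,\chi$ only when $|x|\sim c_\sharp|t|$ and $\xi$ is antiparallel to $x/t$. When $|x|$ lies outside this window, repeated integration by parts in $\xi$ along $\nabla\Phi_\sharp/|\nabla\Phi_\sharp|^2$ yields $O_N(|t|^{-N})$ decay; when $|x|$ lies inside, passing to polar coordinates $\xi=\rho\omega$ and applying non-degenerate stationary phase on $S^{n-1}$ produces the sharp factor $|t|^{-(n-1)/2}$, exactly as for the scalar half-wave propagator. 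Since $\chi\Pi_\sharp$ is merely a smooth compactly supported matrix amplitude, no change to the classical argument is needed. The only potential obstacle is the singularity of $\Pi_\sharp$ at the origin, but the frequency-localization hypothesis neutralizes it, so I do not expect any genuine technical difficulty beyond bookkeeping of the two wave speeds.
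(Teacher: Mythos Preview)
Your argument is correct, and it differs from the paper's in one structural choice. You diagonalize via the Helmholtz (Leray) projections $\Pi_p(\xi)=|\xi|^{-2}\xi\xi^T$ and $\Pi_s=I_n-\Pi_p$, which are globally smooth away from the origin; the annular cutoff then turns each entry of $\chi\Pi_\sharp$ into a $C_c^\infty$ amplitude, so the classical half-wave stationary-phase bound applies entrywise exactly as you say. The paper instead avoids the Helmholtz projections altogether: it constructs smooth rotations $R_\pm(\xi)\in\mathrm{SO}(n)$ on two overlapping cones $\mathbb R^n_\pm$ (with a partition of unity $\varphi_\pm$) satisfying $R_\pm^t L(\xi)R_\pm=\Lambda(\xi)$, writes $e^{it\sqrt{-\Delta^\ast}}=\sum_\pm R_\pm(D)e^{it\sqrt{\Lambda}(D)}R_\pm(D)^t\mathcal P_\pm$, and then quotes the known scalar dispersive estimate for $e^{it\sqrt{-\Delta}}$ as a black box rather than redoing stationary phase. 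For Theorem~\ref{t:dispersive} alone your route is at least as short, since frequency localization neutralizes the only defect of $\Pi_\sharp$ (the origin singularity). The paper's rotation machinery is built because it is reused for the \emph{unlocalized} results later (Strichartz via Mikhlin on $L^r$, weighted $L^2$ bounds for the perturbed problem, the uniform Sobolev inequality), where one needs multipliers satisfying Mikhlin's condition on all of $\mathbb R^n\setminus\{0\}$ rather than just smooth amplitudes on an annulus.
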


Once we have the dispersive estimate \eqref{e:dispers} we can prove the Strichartz estimate for the elastic wave equation whenever $(q,r)$ is wave-admissible using the abstract framework due to Keel--Tao \cite{MR1646048}.  Indeed, we combine the frequency-localized dispersive estimate \eqref{e:dispers} and the $L^2$ estimate (see \eqref{e:unitary} in Section \ref{sec:pfofmain}) to get the estimate for frequency-localized initial data, then by scaling and the Littlewood--Paley inequality we obtain the homogeneous Strichartz estimates with an arbitrary initial data $(f,g)\in \dot H^s(\mathbb R^n)\times \dot H^{s-1}(\mathbb R^n)$. The estimates and the standard $TT^\ast$-argument give the following. 

\begin{thm} \label{t:strichartz}
Let $(q,r)$ and $(\tilde q, \tilde r)$ be wave-admissible pairs with $r, \tilde r<\infty$. If $u$ is a solution to the Cauchy problem \eqref{e:elastic}, then we have
\begin{equation}\label{e:strichartz}
	\|u\|_{L_t^qL_x^r(\mathbb R\times \mathbb R^n)} \lesssim \|f\|_{\dot H^s(\mathbb R^n)}+\|g\|_{\dot H^{s-1}(\mathbb R^n)} +\|F\|_{L_t^{\tilde q'}L_x^{\tilde r'}(\mathbb R\times \mathbb R^n)}
\end{equation}
provided that 
\begin{equation*}
	\frac 1q+\frac nr=\frac n2-s=\frac 1{\tilde q'}+\frac n{\tilde r'}-2.
\end{equation*}
\end{thm}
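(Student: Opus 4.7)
The plan is exactly the one outlined by the authors: combine Theorem~\ref{t:dispersive} with the $L^2$ identity for $U(t):=e^{it\sqrt{-\Delta^\ast}}$ via the Keel--Tao framework, then sum dyadically and invoke $TT^\ast$. The $L^2$ identity itself is obtained by diagonalising $\Delta^\ast$ in frequency: decomposing $\widehat f(\xi)$ into its component parallel to $\xi$ (with eigenvalue $-(\lambda+2\mu)|\xi|^2$) and its orthogonal component (with eigenvalue $-\mu|\xi|^2$) shows that $U(t)$ is a sum of two half-wave propagators with distinct speeds $\sqrt{\lambda+2\mu}$ and $\sqrt{\mu}$, hence a unitary operator on $L^2(\mathbb R^n;\mathbb C^n)$.

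First I would fix a Littlewood--Paley cutoff $\beta$ supported in $\{1/2\le|\xi|\le 2\}$ with dyadic projectors $P_k$. For data whose Fourier transform lies in $\{|\xi|\sim 1\}$, Theorem~\ref{t:dispersive} and the $L^2$ bound combine to give
\[
\|U(t)U(s)^\ast\|_{L^1\to L^\infty}\lesssim|t-s|^{-(n-1)/2},\qquad \|U(t)\|_{L^2\to L^2}\lesssim 1,
\]
which are precisely the Keel--Tao hypotheses \cite{MR1646048} with decay exponent $\sigma=(n-1)/2$. Their theorem then yields
\[
\|U(t)P_0 f\|_{L^q_tL^r_x}\lesssim \|P_0 f\|_{L^2}
\]
for every wave-admissible $(q,r)$ with $r<\infty$, including the sharp endpoint $(q,r)=(2,2(n-1)/(n-3))$ for $n\ge 4$.

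Next I would upgrade to arbitrary frequencies by rescaling: since $-\Delta^\ast$ is homogeneous of degree two, replacing $f$ by $f(2^{-k}\cdot)$ and rescaling $(t,x)$ produces
\[
\|U(t)P_k f\|_{L^q_tL^r_x}\lesssim 2^{ks}\|P_k f\|_{L^2},\qquad s=\tfrac{n}{2}-\tfrac{n}{r}-\tfrac{1}{q},
\]
together with an analogous bound with one derivative lost (hence $\dot H^{s-1}$) for the $g$-term $(-\Delta^\ast)^{-1/2}\sin(t\sqrt{-\Delta^\ast})g$. Summing in $k$ uses the vector-valued Littlewood--Paley square function inequality on $L^q_tL^r_x$, which is available for $q,r\ge 2$ by Minkowski in $t$ and the scalar square function in $x$, yielding the homogeneous part of \eqref{e:strichartz}. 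The inhomogeneous term is then handled by the usual $TT^\ast$ principle, dualising the homogeneous estimate against $(\tilde q,\tilde r)$, with Christ--Kiselev converting the full-line time integral into the retarded one whenever the two admissible pairs do not coincide on the endpoint line.

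The one genuinely delicate point is the \emph{double} endpoint $(q,r)=(\tilde q,\tilde r)=(2,2(n-1)/(n-3))$ in $n\ge 4$, where Christ--Kiselev is unavailable. As with the classical wave equation, this is handled directly by Keel--Tao's bilinear interpolation of the retarded kernel, which uses nothing beyond the two hypotheses already verified above; consequently no new obstruction arises from the elastic nature of the equation.
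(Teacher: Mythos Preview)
Your proposal is correct and follows essentially the same route as the paper: the dispersive estimate of Theorem~\ref{t:dispersive} together with the $L^2$ bound \eqref{e:unitary} feed into the Keel--Tao machinery, after which rescaling and Littlewood--Paley sum over dyadic frequencies, and the inhomogeneous term comes from the standard $TT^\ast$/Keel--Tao argument (including the double endpoint). Your description of the $L^2$ identity via the parallel/orthogonal decomposition of $\widehat f(\xi)$ is equivalent to the paper's diagonalisation through the rotations $R_\pm$; at the $L^2$ level both simply express the unitarity of $e^{it\sqrt{-\Delta^\ast}}$.
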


\begin{rem}\label{r:inhomo} 
When $f=g=0$ in \eqref{e:elastic}, the estimate \eqref{e:strichartz}  is called the inhomogeneous Strichartz estimate, which holds for a wider range of pairs $(q,r)$ and $(\tilde q, \tilde r)$ than that of the wave-admissible pairs. As the precise description of the pairs is rather complicated, we provide the detailed statement in Section \ref{sec:inhomo} (see Theorem \ref{exinho}).
\end{rem}

The estimates \eqref{e:dispers} and \eqref{e:strichartz} are in complete analogue with  the dispersive estimate and Strichartz estimate, respectively, for the classical wave equation
\begin{equation}\label{e:wave}
\begin{cases}
	(\partial_t^2 -\Delta) u(t,x) = F(t,x), \\
	u(0)=f, \quad \partial_t u(0)=g.
\end{cases}
\end{equation}
In context of the wave equation \eqref{e:wave} all the above results are standard, and there has been a large body of literature concerning the Strichartz estimates for \eqref{e:wave}.  Among others, the diagonal case $q = r$ was obtained in \cite{MR512086} in connection with the restriction theorems for the cone.  This was later extended to mixed norms $L^q_t L^r_x$ independently by Ginibre and Velo \cite{MR1351643} and Lindblad and Sogge \cite{MR1335386}.  The remaining endpoints where $q=2$ were later settled by Keel and Tao \cite{MR1646048}.

\subsubsection*{Perturbed elastic wave equation} 
Now we turn to the perturbed elastic wave equation
\begin{equation}\label{e:perturb}
\begin{cases}
	(\partial_t^2 -\Delta^\ast + V(x)) u(t,x) = 0, \\
	u(0,x)=f(x), \quad \partial_t u(0,x)=g(x).
\end{cases}
\end{equation}
In \cite{MR3361693} Barcel\'o et al. studied \eqref{e:perturb} in three spatial dimension $n=3$. Under the assumptions that $V\colon \mathbb R^3\to \mathcal M_{3\times 3}(\mathbb R)$ is symmetric, $|x|^2|V(x)|\le c$ for a small constant $c$, $\mu>0$ and $3\lambda+2\mu>0$, they obtained the Strichartz estimates for \eqref{e:perturb}
\begin{equation*}
	\|u\|_{L_t^q \dot H_r^{\frac1r-\frac1q}} \lesssim \|f\|_{\dot H^\frac12} +\|g\|_{\dot H^{-\frac12}}
\end{equation*}
whenever $(q,r)$ is sharp wave-admissible. 

In \cite{KKS} the authors improved the result to a more general class of potentials $V$ under the weaker assumption \eqref{e:elliptic} on the Lam\'e coefficients.  To facilitate the statement, let us recall the Fefferman--Phong class defined by
\begin{equation*}
	\mathcal F^p:=\bigg\{ V\colon\mathbb R^n\to \mathcal M_{n\times n}(\mathbb C)\colon \|V\|_{\mathcal{F}^p} =\sup_{x\in \mathbb{R}^n,r>0} r^{2-\frac np} \bigg( \int_{B(x,r)}|V(y)|^p dy\bigg)^{\frac1p} < \infty\bigg\},
\end{equation*}
for $1 \leq p \leq n/2$.  Here $B(x,r)$ denotes the open ball in $\mathbb R^n$ centered at $x$ with radius $r$ and $|V|=\big(\sum_{i,j=1}^n |V_{ij}|^2\big)^{1/2}$.  If $1\le p<n/2$ the class $\mathcal{F}^p$ includes the weak space $L^{\frac n2,\infty}$, and in particular, the critical inverse-square potential $|x|^{-2}$. For related results on the classical wave equation, see \cite{MR2003358,MR2106340,MR4061548} and references therein.

\begin{thm}[\cite{KKS}] \label{t:str-per}
Let $n\ge3$ and $V \in \mathcal{F}^p$ for $p>\frac{n-1}2$.  Let $u$ be a solution to \eqref{e:perturb} with $(f,g)\in \dot H^{1/2} (\mathbb{R}^n) \times \dot H^{-1/2}(\mathbb{R}^n)$.  If $\|V\|_{\mathcal F^p}\le c$ for $c>0$ small enough,  then
\begin{equation*} 
	\|u\|_{L_t^q \dot H^{\sigma}_{r}} \lesssim \|f\|_{\dot H^{1/2}} + \|g\|_{\dot H^{-1/2}}
	\end{equation*}
whenever $(q,r)$ is wave-admissible, $q>2$ and $\sigma=\frac{1}{q} + \frac{n}{r} - \frac{n-1}{2}$.
\end{thm}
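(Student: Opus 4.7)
The plan is to deduce the estimate from the free-elastic Strichartz bounds of Theorem~\ref{t:strichartz} (and its inhomogeneous extension Theorem~\ref{exinho}) via Duhamel's formula, treating $Vu$ as a source term and absorbing it using the smallness $\|V\|_{\mathcal F^p}\le c$. First I would write
\[
u(t)=\cos(t\sqrt{-\Delta^*})f+\frac{\sin(t\sqrt{-\Delta^*})}{\sqrt{-\Delta^*}}g-\int_0^t\frac{\sin((t-s)\sqrt{-\Delta^*})}{\sqrt{-\Delta^*}}Vu(s)\,ds.
\]
Theorem~\ref{t:strichartz} directly controls the homogeneous part in $L_t^q\dot H_r^\sigma$ by $\|f\|_{\dot H^{1/2}}+\|g\|_{\dot H^{-1/2}}$, since $\sigma=\tfrac1q+\tfrac nr-\tfrac{n-1}2$ is precisely the regularity level associated with data in $\dot H^{1/2}\times\dot H^{-1/2}$. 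It therefore suffices to bound the Duhamel integral in $L_t^q\dot H_r^\sigma$ by a small multiple of $\|u\|_{L_t^q\dot H_r^\sigma}$, after which a fixed-point argument closes the full estimate.

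For the Duhamel term I would invoke the inhomogeneous Strichartz estimate of Theorem~\ref{exinho} with an auxiliary pair $(\tilde q,\tilde r)$ chosen so that it maps $L_t^{\tilde q'}L_x^{\tilde r'}\to L_t^q\dot H_r^\sigma$. The problem then reduces to the space-time bilinear inequality
\[
\|Vu\|_{L_t^{\tilde q'}L_x^{\tilde r'}}\lesssim\|V\|_{\mathcal F^p}\|u\|_{L_t^q\dot H_r^\sigma}.
\]
Freezing $t$, this becomes a Fefferman--Phong trace inequality $\|Vw\|_{L_x^{\tilde r'}}\lesssim\|V\|_{\mathcal F^p}\|w\|_{\dot H_r^\sigma}$ that I would derive by H\"older in $x$ combined with an appropriate Sobolev embedding. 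The scaling $r^{2-n/p}$ built into $\|V\|_{\mathcal F^p}$ matches the wave scaling $\sigma=\tfrac1q+\tfrac nr-\tfrac{n-1}2$ exactly at the threshold $p=(n-1)/2$, which is the source of the hypothesis $p>(n-1)/2$. I expect this step may require a Whitney/dyadic decomposition of $\mathbb R^n$ to resolve $V$ at the scales on which its $\mathcal F^p$ supremum is achieved, together with Littlewood--Paley pieces of $u$; integration in $t$ via H\"older then lifts the inequality to space-time, exploiting that the hypothesis $q>2$ leaves enough room inside the wave-admissible region to select $(\tilde q,\tilde r)$ freely within the extended range of Theorem~\ref{exinho}.

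The main obstacle will be the trace inequality of the previous paragraph: one must simultaneously match the local $L^p$ behaviour of $V$ to the global Sobolev norm of $u$ across all scales, which requires sharp scale-by-scale bookkeeping of the Fefferman--Phong norm. Once that step is carried out, the remainder of the proof is essentially automatic: smallness of $c$ lets us absorb the Duhamel term into the left-hand side, concluding the estimate for all wave-admissible $(q,r)$ with $q>2$.
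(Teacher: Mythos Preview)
Your approach diverges from the paper's at the crucial step of controlling the Duhamel term, and the divergence introduces a genuine obstacle. The paper does \emph{not} close the iteration directly in $L_t^q\dot H_r^\sigma$; instead it pivots through the weighted space $L^2_{x,t}(|V|)$. Concretely, Proposition~\ref{p:weighted} establishes Ruiz--Vega type weighted $L^2$ smoothing estimates for the free elastic propagator, e.g.\ $\|\cos(t\sqrt{-\Delta^*})f\|_{L^2_{x,t}(|V|)}\lesssim\|V\|_{\mathcal F^p}^{1/2}\|f\|_{\dot H^{1/2}}$, together with the inhomogeneous analogue \eqref{p:inho}. Applying these to the Duhamel representation \eqref{e:sol} first yields the a~priori bound $\|u\|_{L^2_{x,t}(|V|)}\lesssim\|V\|_{\mathcal F^p}^{1/2}(\|f\|_{\dot H^{1/2}}+\|g\|_{\dot H^{-1/2}})$ by absorption, and then the Duhamel term is placed in $L_t^q\dot H_r^\sigma$ by splitting $\sin((t-s)\sqrt{-\Delta^*})$ via the addition formula and pairing the homogeneous Strichartz estimate \eqref{e:homo1} with the \emph{duals} of \eqref{p:cos}--\eqref{p:sin}. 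The Christ--Kiselev lemma is what passes from the full time integral to the retarded one, and this is exactly where the restriction $q>2$ enters---not, as you suggest, from needing extra room to choose $(\tilde q,\tilde r)$.

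Your proposed trace inequality $\|Vw\|_{L^{\tilde r'}_x}\lesssim\|V\|_{\mathcal F^p}\|w\|_{\dot H_r^\sigma}$ is not the Fefferman--Phong inequality, which is intrinsically an $L^2$-weighted statement (and whose space-time refinement at regularity $1/2$ is precisely what Ruiz--Vega supply). Potentials in $\mathcal F^p$ are only \emph{uniformly locally} in $L^p$ and need not belong to any global $L^a$, so the H\"older step $\|Vw\|_{L^{\tilde r'}}\le\|V\|_{L^a}\|w\|_{L^b}$ is unavailable; a Whitney decomposition does not repair this unless it ultimately reproduces a weighted $L^2$ bound. Moreover, since $V$ is time-independent you are forced to take $\tilde q'=q$, after which the gap condition \eqref{gap}, wave-acceptability of $(\tilde q,\tilde r)$, and your putative trace bound must hold simultaneously---constraints you have not verified. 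In short, the weighted-$L^2$ pivot is not incidental but is the mechanism by which the hypothesis $V\in\mathcal F^p$, $p>\frac{n-1}{2}$, is actually used; your direct route bypasses it without a working replacement.
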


In \cite{KKS}, instead of using the spectral theoretic approach of \cite{MR3361693}, the authors took an approach based on harmonic analysis technique. They focused on the Fourier multiplier of $\sqrt{-\Delta^\ast}$ after viewing the solution  as a sum of the Duhamel term given by $V(x)u(t,x)$  and the solution to the free case.

The approach in \cite{KKS} is significantly different from that in \cite{MR3361693} and leads to improvements on assumptions on $\lambda$, $\mu$ and $V$. However, it still has a similar flavor in that it followed a strategy making use of the Helmholtz decomposition (i.e., Leray projection) of vector fields $f=f_S+f_P$, where $f_S$ is a divergence-free field and $f_P$ is a gradient field.  To carry out the strategy, it was necessary to control $\|f_S\|_{\dot H^s}+\|f_P\|_{\dot H^s}$ and $\|F_S\|_{L^2(w)}+\|F_P\|_{L^2(w)}$ with $\|f\|_{\dot H^s}$ and $\|F\|_{L^2(w)}$, respectively. Thus, the authors had  to use $L^2$-orthogonality between the Leray projections (\cite[Lemma 2.1]{KKS}) and elliptic regularity estimates (\cite[Lemma 4.1]{KKS}).

In this paper, we develop a new approach to analyze \eqref{e:elastic} and \eqref{e:perturb}.  Rather than relying on the Helmholtz decomposition of vector fields,  we make use of  diagonalization of the Lam\'e operator. As the rotations resulting from the diagonalization process are smooth and homogeneous of degree zero, they satisfy the Mikhlin condition, hence, the classical multiplier theorems become available. Therefore we can utilize the estimates for the classical wave equations.  The diagonalization argument completely replaces the role of the Helmholtz decomposition,  for instance, we can prove Theorem \ref{t:str-per} in a simpler way without using the Helmholtz decomposition (see Section \ref{sec:4}). Similarly, the proofs of all the other results in \cite{MR3361693, KKS} can be simplified. This direct and Fourier-analytic approach is likely to be useful in different problems because neither any orthogonality property of the Leray projections nor elliptic regularity estimate is necessary.

\subsubsection*{Uniform Sobolev inequality}
We can adapt the diagonalization method to obtain the uniform Sobolev inequality for the elastic wave operator $\partial_t^2-\Delta^\ast$. 

For the wave operator $\partial_t^2-\Delta$, 
Kenig, Ruiz and Sogge \cite{MR894584} proved the uniform Sobolev inequality  
\begin{equation}\label{e:uni-sob}
	\|u\|_{L^q(\mathbb R^{1+n})} \le C\| (\partial_t^2-\Delta +a\cdot\nabla+z)u \|_{L^p(\mathbb R^{1+n})}
\end{equation}
with $C$ independent of $a\in \mathbb C^{1+n}$ and $z\in\mathbb C$, when $(p,q)=(\frac{2(n+1)}{n+3}, \frac{2(n+1)}{n-1})$. As a consequence, they obtained a unique continuation result for $|(\partial_t^2-\Delta) u|\le |Vu|$ via the Carleman inequality
\begin{equation}\label{e:wcar}
	\|e^{v\cdot (t,x)}u\|_{L^q(\mathbb R^{1+n})} \le C\| e^{v\cdot (t,x)} (\partial_t^2-\Delta) u \|_{L^p(\mathbb R^{1+n})},
\end{equation}
which follows from \eqref{e:uni-sob}. Later, the range of $p,q$ on which the uniform Sobolev inequality \eqref{e:uni-sob} and  Carleman inequality \eqref{e:wcar} hold was completely characterized in \cite{MR3545933} and \cite{JKL18}, respectively, where the authors proved that both \eqref{e:uni-sob}  and \eqref{e:wcar} hold if and only if\footnote{If we consider the Laplacian $\Delta_{\mathbb R^{1+n}}$ instead of $\partial_t^2-\Delta$, the ranges of $p,q$ for the uniform Sobolev and Carleman inequalities do not coincide. We refer the interested readers to \cite{JKL18} for details.} 
\[	\frac1p-\frac1q=\frac2{n+1}, \quad \frac{2n(n+1)}{n^2+4n-1} <p< \frac{2n}{n+1}. 	\]
In proving the uniform Sobolev inequality \eqref{e:uni-sob}, uniform resolvent estimate (\eqref{e:uni-sob} with $a=0$) that is seemingly weaker than \eqref{e:uni-sob} is a main ingredient. So, the two are more or less equivalent. 

However, three of the authors \cite{KLS} recently proved that if the wave operator $\partial_t^2-\Delta$ is replaced by the ($(n+1)$-dimensional) Lam\'e operator $\Delta^\ast_{\mathbb R^{n+1}}$ in the above (\eqref{e:uni-sob} and \eqref{e:wcar}), then the uniform Sobolev inequality \eqref{e:uni-sob} and Carleman inequality \eqref{e:wcar} fail, while the uniform (and even non-uniform sharp) resolvent estimates are available in the general context of \cite{MR4076079} (also, see \cite{MR3060701,Co}). This shows a fundamental difference between $\Delta^\ast$ and $\Delta$.

We aim to investigate in this direction for the elastic wave operator $\partial_t^2-\Delta^\ast$.
\begin{thm} \label{t:uniresol}
There exists $C$ independent of $(a,z)\in \mathbb C\times \mathbb C$ such that 
\begin{equation}\label{e:res}
	\|u\|_{L^q(\mathbb R^{1+n})} \le C \|(\partial_t^2-\Delta^\ast + a\partial_t-z)u\|_{L^p(\mathbb R^{1+n})}
\end{equation}
if and only if 
\begin{equation}\label{e:unif-ran}
	\frac1p-\frac1q=\frac2{n+1}, \quad \frac{2n(n+1)}{n^2+4n-1} <p< \frac{2n}{n+1}. 
\end{equation}
\end{thm}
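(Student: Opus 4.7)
The plan is to adapt the diagonalization strategy used in the preceding sections, which in the present setting amounts to decoupling the Lam\'e wave operator via the Helmholtz decomposition. The spatial Leray projections $P_S=I-\nabla\operatorname{div}(-\Delta)^{-1}$ and $P_P=\nabla\operatorname{div}(-\Delta)^{-1}$ have matrix-valued symbols that are smooth and homogeneous of degree zero on $\mathbb R^n\setminus\{0\}$, so they are $L^p$-bounded for $1<p<\infty$ by the Mikhlin multiplier theorem. Writing $u=u_S+u_P$ with $u_S=P_S u$ and $u_P=P_P u$, and using $P_S\nabla=0$ together with $\nabla\operatorname{div} u_P=\Delta u_P$ (since $u_P$ is a gradient field), a direct computation gives
\begin{align*}
P_S(\partial_t^2-\Delta^\ast+a\partial_t-z)u &= (\partial_t^2-\mu\Delta+a\partial_t-z)u_S,\\
P_P(\partial_t^2-\Delta^\ast+a\partial_t-z)u &= (\partial_t^2-(\lambda+2\mu)\Delta+a\partial_t-z)u_P.
\end{align*}
Thus the Lam\'e wave operator decouples into two scalar wave operators with propagation speeds $\sqrt{\mu}$ and $\sqrt{\lambda+2\mu}$, and the spatial rescaling $x\mapsto x/c$ identifies $\partial_t^2-c^2\Delta+a\partial_t-z$ with $\partial_t^2-\Delta+a\partial_t-z$ up to a constant depending only on $c$.

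For the sufficiency under \eqref{e:unif-ran}, we apply the classical uniform Sobolev inequality for the scalar wave operator (\cite{MR894584,MR3545933}) component-wise to $u_S$ and $u_P$, and use the $L^p$-boundedness of $P_S$, $P_P$:
\[
\|u_S\|_{L^q}\lesssim \|(\partial_t^2-\mu\Delta+a\partial_t-z)u_S\|_{L^p}=\|P_S(\partial_t^2-\Delta^\ast+a\partial_t-z)u\|_{L^p}\lesssim \|(\partial_t^2-\Delta^\ast+a\partial_t-z)u\|_{L^p},
\]
and analogously for $u_P$ with speed $\sqrt{\lambda+2\mu}$. Summing and using $u=u_S+u_P$ yields \eqref{e:res}.

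For the necessity, we test \eqref{e:res} with the gradient field $u=\nabla\phi$ for scalar Schwartz $\phi$. Since $\Delta^\ast\nabla\phi=(\lambda+2\mu)\nabla\Delta\phi$ and the Riesz transforms $R_j$ are $L^r$-bounded for $1<r<\infty$, the identity $|\nabla|=-\sum_j R_j\partial_j$ gives $\|\nabla\psi\|_{L^r}\simeq \||\nabla|\psi\|_{L^r}$, so \eqref{e:res} reduces to
\[
\||\nabla|\phi\|_{L^q}\lesssim \bigl\|(\partial_t^2-(\lambda+2\mu)\Delta+a\partial_t-z)|\nabla|\phi\bigr\|_{L^p}.
\]
Setting $\Phi=|\nabla|\phi$, which sweeps out a dense subspace of $L^q$ as $\phi$ varies over Schwartz functions whose Fourier transform vanishes near the origin, and rescaling $x\mapsto x/\sqrt{\lambda+2\mu}$, we arrive at the scalar uniform Sobolev inequality for $\partial_t^2-\Delta+a\partial_t-\tilde z$. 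By the characterization in \cite{MR3545933}, the range \eqref{e:unif-ran} must hold.

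The main obstacle is conceptual rather than technical: one must recognize that the gradient subspace already carries enough freedom to force the sharp range for the full vector-valued inequality, which is exactly what the Riesz transform identity $|\nabla|=-\sum_j R_j\partial_j$ encodes. Once the Helmholtz decoupling is in place, everything else is bookkeeping on top of the classical uniform Sobolev estimate for the scalar wave operator and standard Mikhlin-type bounds.
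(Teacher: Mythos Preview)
Your argument is correct. The sufficiency direction is handled exactly as in the paper---reduce to the scalar uniform Sobolev inequality of \cite{MR3545933} via Mikhlin-bounded projections---and you supply a clean necessity argument (testing on gradient fields and using Riesz transforms) that the paper's proof section does not spell out.

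The one genuine difference is the choice of decoupling device. The paper uses the rotations $R_\pm(\xi)$ and the conical partition $\mathcal P_\pm$ from Section~2 to diagonalize $L(\xi)$, whereas you use the Leray projections $P_S,P_P$ directly. These are essentially two presentations of the same spectral decomposition of $L(\xi)$: the symbol $\xi\xi^t/|\xi|^2$ of $P_P$ is the orthogonal projection onto the eigenvector $\xi/|\xi|$, which is precisely what $R_\pm(\xi)e_1e_1^t(R_\pm(\xi))^t$ computes. For this particular theorem your route is slightly more economical, since $P_S,P_P$ are globally defined Mikhlin multipliers and no half-space partition $\{\varphi_+,\varphi_-\}$ is needed. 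The paper's $R_\pm$ machinery, by contrast, is built to serve the other results as well (dispersive estimates, weighted $L^2$ bounds in Section~5), where having an honest unitary diagonalization rather than a pair of non-invertible projections is convenient. One small remark: your opening sentence describes the Helmholtz decomposition as what ``the preceding sections'' do; in fact the paper's point is to \emph{avoid} Helmholtz in favor of $R_\pm$, so you may want to rephrase that.
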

We close the introduction with a few remarks.  By the famous argument in \cite{MR894584} certain unique continuation property follows from the uniform Sobolev inequality \eqref{e:res}. See Corollary \ref{c:uniq} in Section \ref{sec:uniresol}. In view of \cite{KLS}, if the first order spatial derivatives $\nabla_x$ is involved in the right side of \eqref{e:res}, it seems that any uniform estimate cannot hold true. However, we do not pursue this issue here.

\subsubsection*{Organization}
In the next section, we diagonalize the Lam\'e operator. In Section 3, we prove the dispersive estimate (Theorem \ref{t:dispersive}). In Section 4, we prove the  Strichartz inequalities (Theorem \ref{t:strichartz}). In Section 5, we prove Theorem \ref{t:str-per}. In the last section, we prove Theorem \ref{t:uniresol} and deduce a unique continuation property of the elastic wave operator $\partial_t^2-\Delta^\ast+V$. 

\section{Diagonalization of the Lam\'e operator} \label{sec:disper}
In this section, we diagonalize $\sqrt{-\Delta^\ast}$. To do so, we need to rotate the associated multiplier and decompose the frequency space into two parts to make the involved rotations well-defined.
\subsection{Rotations in the frequency space}
Let $e_1=(1,0,\ldots, 0)^t\in\mathbb R^n$ and $S_\pm =\{\omega \in S^{n-1}\colon -\frac1{\sqrt 2} \le \omega\cdot (\pm e_1)\le 1\}$. For every $\omega\in S_\pm\setminus \{\pm e_1\}$ we set 
\[	\mathcal C_\pm(\omega)=S_\pm\cap \mathrm{span}\{e_1,\omega\}.	\]
In other words, $\mathcal C_\pm(\omega)$ is the intersection of $S_\pm$ and the great circle passing through $e_1$ and $\omega$.  We take $\rho_\pm(\omega)\in \mathrm{SO}(n)$ so that its transpose $(\rho_\pm(\omega))^t$ is the unique rotation mapping $\omega$ to $\pm e_1$ along the arc $\mathcal C_\pm(\omega)$ and satisfying $(\rho_\pm(\omega))^t y=y$ whenever $y\in \mathrm{span}\{e_1,\omega\}^\perp$. When $\omega=\pm e_1$ we set $\rho_\pm(\omega)=I_n$.

It is clear that the mapping $\rho_\pm\colon S_\pm \to \mathrm{SO}(n)$ is smooth and $\omega=\pm \rho_\pm (\omega) e_1$ if $\omega\in S_\pm$. Now, let us set $\mathbb R_{\pm}^n=\{\xi\in \mathbb R^n\setminus \{0\}\colon \xi/|\xi|\in S_\pm \} $ and define $R_{\pm}\colon \mathbb R_{\pm}^n \to \mathrm{SO}(n)$ by 
\[	R_{\pm}(\xi)=\rho_\pm (\xi/|\xi|), \quad \xi\in \mathbb R^n_\pm. 	\]
In each of $\mathbb R^n_+$ and $\mathbb R^n_-$, the matrices $R_+$ and $R_-$ satisfy the Mikhlin type condition:
\begin{lem} \label{l:mikhlin}
The mapping $R_\pm\colon \mathbb R_\pm^n \to \mathrm{SO} (n)$  is smooth and homogeneous of degree zero. Thus, every $(j,k)$-component $r^\pm_{jk}(\xi)$ of $R_\pm(\xi)$ satisfies
\begin{equation}\label{e:mikhlin}
	|\partial_\xi^\alpha r^\pm_{jk}(\xi) | \lesssim |\xi|^{-|\alpha|}, \quad \xi\in \mathbb R_\pm^n
\end{equation}
for all multi-indices $\alpha\in\mathbb N_0^n$.
\end{lem}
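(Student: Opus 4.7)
\medskip

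\noindent\textbf{Proof proposal.} The homogeneity of degree zero is built into the definition, since $R_\pm(\xi)=\rho_\pm(\xi/|\xi|)$ only depends on the direction of $\xi$; therefore $\partial_\xi^\alpha r^\pm_{jk}$ is automatically homogeneous of degree $-|\alpha|$. The plan is then to (i) produce a closed-form expression for $\rho_\pm(\omega)$ that is manifestly smooth on an open neighborhood of $S_\pm$ in $S^{n-1}$, and (ii) combine this with the homogeneity to extract the Mikhlin bound.

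For step (i), I would work with $\rho_+$; the case of $\rho_-$ is identical after replacing $e_1$ by $-e_1$. Given $\omega\in S_+$, decompose $\omega=\omega_1 e_1+v$ with $v=\omega-\omega_1 e_1\perp e_1$. The rotation $(\rho_+(\omega))^t$ must act by angle $-\theta$ in the $2$-plane $\mathrm{span}\{e_1,v\}$ and trivially on its orthogonal complement, where $\cos\theta=\omega_1$. A direct computation of the corresponding Givens-type rotation, using $|v|^2=1-\omega_1^2=(1-\omega_1)(1+\omega_1)$ to cancel the apparent $|v|^{-1}$ singularity, yields
\begin{equation*}
(\rho_+(\omega))^t= I_n-(1-\omega_1)\,e_1 e_1^t-\frac{v v^t}{1+\omega_1}+\bigl(e_1 v^t-v e_1^t\bigr).
\end{equation*}
The right-hand side is a rational function of the components of $\omega$ whose only singularity is at $\omega_1=-1$, i.e.\ at $\omega=-e_1$, which lies outside $S_+$ (there $\omega_1\ge -1/\sqrt 2$). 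In particular the formula reduces to $I_n$ at $\omega=e_1$, matching our convention, and extends smoothly to a neighborhood of $S_+$ in $S^{n-1}$. One should also verify that the prescribed great-circle path from $\omega$ to $e_1$ is realized by this matrix, which is immediate from applying it to $\omega$.

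For step (ii), composing with the smooth map $\xi\mapsto \xi/|\xi|$ on $\mathbb R^n\setminus\{0\}$ shows that every component $r^+_{jk}$ is $C^\infty$ on an open conic neighborhood of $\overline{\mathbb R^n_+}\setminus\{0\}$ and homogeneous of degree zero. Differentiating the identity $r^+_{jk}(\lambda\xi)=r^+_{jk}(\xi)$ for $\lambda>0$ gives $(\partial_\xi^\alpha r^+_{jk})(\lambda\xi)=\lambda^{-|\alpha|}(\partial_\xi^\alpha r^+_{jk})(\xi)$, whence
\begin{equation*}
\bigl|\partial_\xi^\alpha r^+_{jk}(\xi)\bigr|= |\xi|^{-|\alpha|}\bigl|\partial_\xi^\alpha r^+_{jk}\bigl(\xi/|\xi|\bigr)\bigr|\lesssim |\xi|^{-|\alpha|},
\end{equation*}
since $\partial_\xi^\alpha r^+_{jk}$ is continuous on the compact set $S_+$. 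The same reasoning applied to $\rho_-$ gives the estimate for $r^-_{jk}$.

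The only genuinely delicate point I foresee is verifying smoothness at the distinguished direction $\omega=e_1$ (respectively $-e_1$), where the geometric description via an arc degenerates; the explicit formula above bypasses this by exhibiting the removable nature of the apparent singularity through the factorization $1-\omega_1^2=(1-\omega_1)(1+\omega_1)$. Everything else is routine.
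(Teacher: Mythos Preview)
Your argument is correct. In fact it supplies strictly more than the paper's own proof, which simply asserts that ``the smoothness, homogeneity and boundedness of $r_{j,k}^\pm$ are clear'' and then, for $|\alpha|>0$, passes to spherical coordinates and writes $\partial_{\xi_j}=a_j(\theta)\partial_r+A_j(\theta,\partial_\theta)\,r^{-1}$ to read off \eqref{e:mikhlin} from the radial homogeneity. Your route differs in two respects. First, you produce the closed-form Givens-type expression
\[
(\rho_+(\omega))^t=I_n-(1-\omega_1)e_1e_1^t-\frac{vv^t}{1+\omega_1}+e_1v^t-ve_1^t,
\]
which makes the smoothness on $S_+$ (including at $\omega=e_1$) manifest rather than asserted; this is a genuine improvement since the geometric description of $\rho_\pm$ via the arc $\mathcal C_\pm(\omega)$ does degenerate at $\pm e_1$, and the paper does not comment on why this causes no problem. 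Second, for the decay you differentiate the scaling identity $r^+_{jk}(\lambda\xi)=r^+_{jk}(\xi)$ and invoke compactness of $S_+$, whereas the paper uses the spherical-coordinate decomposition of $\partial_{\xi_j}$. These are equivalent ways of exploiting degree-zero homogeneity, and both are standard; your version is perhaps cleaner, while the paper's version has the virtue of not requiring any particular formula for $\rho_\pm$ and would apply verbatim to any smooth homogeneous-of-degree-zero map.
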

\begin{proof}
By the definition of $R_\pm$ and our choice of $\rho_\pm$, the smoothness,  homogeneity and boundedness of $r_{j,k}^\pm$ are clear. Hence, we need only to show the inequality \eqref{e:mikhlin}  when $|\alpha|>0$. By the standard spherical coordinate $(r,\theta)\to\xi$  and the chain rule, we see that
\[	\frac{\partial}{\partial \xi_j} = a_j(\theta) \frac{\partial}{\partial r} + A_j(\theta, \partial_\theta)\frac1r ,	\]
where $a_j$ and $A_j$ are smooth functions independent of $r$. The inequality \eqref{e:mikhlin} follows by homogeneity. 
\end{proof}

Consequently, Mikhlin's multiplier theorem imply the following. Let $\{\varphi_+, \varphi_-\}$ be a smooth partition of unity on the unit sphere $S^{n-1}$ subordinate to the  covering $\{\mathrm{int} S_+, \mathrm{int} S_-\}$ and let $\mathcal P_+$ and $\mathcal P_-$ be projections to $\mathbb R^n_+$ and $\mathbb R_-^n$, respectively, defined by 
\[	\widehat{\mathcal P_\pm f} (\xi) = \varphi_\pm (\xi/|\xi|) \widehat f(\xi), \quad \xi\in \mathbb R^n\setminus \{0\}.	\]
We denote $D=-i\nabla$ and $m(D)f(x)=(m \widehat f\,)^\vee(x)$.
\begin{lem} \label{l:norm_equi} 
Let $1<r<\infty$. Then, for any vector-valued function $f$, we have
\begin{equation} \label{e:norm_equi}
	 \|R_\pm(D)\mathcal P_\pm f\|_{L^r(\mathbb R^n)} \approx \|\mathcal P_\pm f\|_{L^r(\mathbb R^n)} \lesssim \|f\|_{L^r(\mathbb R^n)}.
\end{equation}
\end{lem}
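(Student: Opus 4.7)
The plan is to apply the scalar Mikhlin multiplier theorem componentwise, combined with the fact that $R_\pm(\xi)$ is orthogonal for each $\xi\in\mathbb R^n_\pm$. Lemma \ref{l:mikhlin} already supplies the Mikhlin-type bound \eqref{e:mikhlin}; the only nuisance is that the entries $r^\pm_{jk}$ are a priori defined only on $\mathbb R^n_\pm$, so I need to couple them with the cutoff $\varphi_\pm$ to manufacture genuine Mikhlin multipliers on $\mathbb R^n\setminus\{0\}$.

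First I would dispose of the easy inequality $\|\mathcal P_\pm f\|_{L^r}\lesssim \|f\|_{L^r}$: the symbol $\varphi_\pm(\xi/|\xi|)$ is smooth on $\mathbb R^n\setminus\{0\}$ and homogeneous of degree zero, hence Mikhlin, and the vectorial definition \eqref{e:def1} reduces the claim to a componentwise application of the scalar theorem.

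To prove $\|R_\pm(D)\mathcal P_\pm f\|_{L^r}\lesssim \|\mathcal P_\pm f\|_{L^r}$, I would introduce the matrix symbol
\[
M_\pm(\xi):= R_\pm(\xi)\,\varphi_\pm(\xi/|\xi|),\qquad \xi\in\mathbb R^n_\pm,
\]
extended by zero outside $\mathbb R^n_\pm$. Since $\varphi_\pm$ together with all its derivatives vanishes near $\partial S_\pm$, while each $r^\pm_{jk}$ satisfies \eqref{e:mikhlin} on $\mathbb R^n_\pm$, the product $(M_\pm)_{jk}$ extends to a smooth, homogeneous degree-zero function on all of $\mathbb R^n\setminus\{0\}$, and therefore satisfies the Mikhlin condition globally. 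The scalar multiplier theorem then gives $\|(M_\pm)_{jk}(D)f_k\|_{L^r}\lesssim \|f_k\|_{L^r}$ for each pair $(j,k)$, and summing over $j,k$ via \eqref{e:def1} yields the desired bound (noting that $M_\pm(D)f=R_\pm(D)\mathcal P_\pm f$).

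For the converse inequality I would use orthogonality: $R_\pm(\xi)^t R_\pm(\xi)=I_n$ on $\mathbb R^n_\pm$, so that
\[
\mathcal P_\pm f = R_\pm(D)^t\bigl(R_\pm(D)\mathcal P_\pm f\bigr).
\]
Because $R_\pm(D)\mathcal P_\pm f$ has Fourier support in $\mathrm{supp}\,\varphi_\pm\subset\mathrm{int}\,S_\pm$, I can fix an auxiliary smooth cutoff $\widetilde\varphi_\pm$ with $\widetilde\varphi_\pm\equiv 1$ on $\mathrm{supp}\,\varphi_\pm$ and $\mathrm{supp}\,\widetilde\varphi_\pm\subset\mathrm{int}\,S_\pm$, and replace $R_\pm(\xi)^t$ by $R_\pm(\xi)^t\widetilde\varphi_\pm(\xi/|\xi|)$ without changing the function. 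The latter symbol is Mikhlin on $\mathbb R^n\setminus\{0\}$ by the same argument as before (the transpose of a Mikhlin matrix symbol is again Mikhlin), and the scalar multiplier theorem applied componentwise finishes the proof. The main (in fact only) obstacle is this passage from the local Mikhlin bounds of Lemma \ref{l:mikhlin} on $\mathbb R^n_\pm$ to global Mikhlin bounds on $\mathbb R^n\setminus\{0\}$, which is resolved cleanly by the smooth truncation $\varphi_\pm$ (respectively $\widetilde\varphi_\pm$) that vanishes to infinite order across $\partial S_\pm$.
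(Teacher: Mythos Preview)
Your proposal is correct and follows essentially the same route as the paper's proof: apply Mikhlin's multiplier theorem componentwise using the bounds from Lemma~\ref{l:mikhlin}, and then use $R_\pm^t R_\pm = I_n$ for the reverse inequality. The paper's proof is terser and does not spell out the cutoff issue you address; your care in turning the locally defined symbols into global Mikhlin multipliers via $\varphi_\pm$ and $\widetilde\varphi_\pm$ is the honest version of what the paper leaves implicit.

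One minor slip to tidy up: in your second step you define $M_\pm(\xi)=R_\pm(\xi)\varphi_\pm(\xi/|\xi|)$ and observe $M_\pm(D)f=R_\pm(D)\mathcal P_\pm f$, from which Mikhlin gives $\|R_\pm(D)\mathcal P_\pm f\|_{L^r}\lesssim\|f\|_{L^r}$, not the stated $\lesssim\|\mathcal P_\pm f\|_{L^r}$ (and you cannot simply substitute $\mathcal P_\pm f$ for $f$, since $\mathcal P_\pm$ is not idempotent: $\varphi_\pm^2\neq\varphi_\pm$). The fix is exactly the $\widetilde\varphi_\pm$ trick you already use for the reverse direction: set $\widetilde M_\pm(\xi)=R_\pm(\xi)\widetilde\varphi_\pm(\xi/|\xi|)$, note that $\widetilde M_\pm(D)(\mathcal P_\pm f)=R_\pm(D)\mathcal P_\pm f$ because $\widetilde\varphi_\pm\equiv 1$ on $\mathrm{supp}\,\varphi_\pm$, and apply Mikhlin to $\widetilde M_\pm$.
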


\begin{proof}
The last inequality in the above is a direct consequence of Mikhlin's multiplier theorem. It follows from the definition \eqref{e:def1}, Lemma \ref{l:mikhlin} and Mikhlin's multiplier theorem that
\[	\|R_\pm(D)\mathcal P_\pm f\|_{L^r(\mathbb R^n)} \lesssim \|\mathcal P_\pm f\|_{L^r(\mathbb R^n)}.	\]
Considering $(R_\pm(\xi))^t$ instead of $R_\pm(\xi)$ we also get the reverse inequality
\[	\|\mathcal P_\pm f\|_{L^r(\mathbb R^n)} \lesssim \|R_\pm(D) \mathcal P_\pm f\|_{L^r(\mathbb R^n)},	\]
which shows the first estimate in \eqref{e:norm_equi}.
\end{proof}

\subsection{Diagonalization}
By taking the Fourier transform it is easy to see that the multiplier associated with the differential operator $-\Delta^\ast-z$, $z\in \mathbb C$, is the matrix-valued function $L_z\colon \mathbb R^n\to \mathcal M_{n\times n} (\mathbb C)$ defined by 
\[	L_z(\xi) := (\mu|\xi|^2-z)I_n +(\lambda+\mu)\xi\xi^t.	\]
See \cite[Proof of Lemma 2.1]{KLS}. In particular, the matrix $L(\xi):=L_0(\xi)$ is the multiplier of $-\Delta^\ast$.   

If we write $\xi\in \mathbb R^n_\pm$ as $\xi=|\xi|\omega$ with $\omega\in S_\pm$ we have $(R_\pm (\xi))^t \xi =|\xi| (\rho_\pm(\omega))^t \omega =\pm |\xi|e_1$ by the definition of $\rho_\pm$. Hence we see that, if $\xi\in \mathbb R_\pm^n$ then
\begin{align*}
	(R_\pm(\xi))^t L_z(\xi) R_\pm(\xi) 
	&= (\mu |\xi|^2-z)(R_\pm(\xi))^t R_\pm(\xi) + (\lambda+\mu) (R_\pm(\xi))^t\xi\xi^tR_\pm(\xi)\\
	&= (\mu |\xi|^2-z)I_n +(\lambda+\mu) |\xi|^2 e_1e_1^t \\
	&= \mathrm{diag} \big((\lambda+2\mu)|\xi|^2-z, \mu|\xi|^2-z, \ldots, \mu|\xi|^2-z\big) ,
\end{align*}
where $\mathrm{diag} (a_1, \ldots, a_n)$ denotes the $n\times n$ diagonal matrix whose $j$-th diagonal entry is  $a_j$. Thus, for any $\xi\in \mathbb R^n$ 
\[	\det(L(\xi)-zI_n) = ((\lambda+2\mu)|\xi|^2-z)(\mu|\xi|^2-z)^{n-1},	\]
so the eigenvalues of $L(\xi)$ are $(\lambda+2\mu)|\xi|^2$ and $\mu|\xi|^2$. The latter is of multiplicity $n-1$. 

Setting $\Lambda(\xi) :=(R_\pm(\xi))^t L(\xi) R_\pm(\xi) =\mathrm{diag} ((\lambda+2\mu)|\xi|^2, \mu|\xi|^2, \ldots, \mu|\xi|^2 )$ we have 
\begin{equation*}
	L(\xi)= R_\pm(\xi) (R_\pm(\xi))^t  L(\xi) R_\pm(\xi) (R_\pm(\xi))^t  =R_\pm(\xi) \Lambda(\xi) (R_\pm(\xi))^t
\end{equation*}
on $\mathbb R_\pm^n$, and therefore
\begin{equation}\label{e:diag}
	-\Delta^\ast=L(D)=\sum_\pm L(D)\mathcal P_\pm = \sum_\pm R_\pm (D)\Lambda(D)(R_\pm (D))^t \mathcal P_\pm.
\end{equation}
Clearly, we can take  the square root in the above equations under the assumption \eqref{e:elliptic}. So, $\sqrt{L}(\xi)= R_\pm(\xi)  \sqrt{\Lambda}(\xi) (R_\pm(\xi))^t$ on $\mathbb R_\pm^n$ and
\begin{equation}\label{e:diag2}
	 \sqrt{-\Delta^\ast}=\sqrt {L}(D)=\sum_\pm \sqrt {L}(D)\mathcal P_\pm =\sum_\pm R_\pm (D)\sqrt{\Lambda}(D)(R_\pm (D))^t\mathcal P_\pm,	
\end{equation}
where $\sqrt{\Lambda}(\xi)=\mathrm{diag} (\sqrt{\lambda+2\mu}|\xi|, \sqrt{\mu}|\xi|, \ldots, \sqrt{\mu}|\xi| )$.

\section{Proofs of the dispersive estimate and Strichartz estimate}\label{sec:pfofmain}
Considering the matrix exponential, the above diagonalization process enables us to express the semigroup as
\begin{equation}\label{dia}
	e^{it\sqrt{-\Delta^\ast}} = \sum_\pm e^{it\sqrt {L}(D)}\mathcal P_\pm  = \sum_\pm R_\pm(D) e^{it \sqrt {\Lambda}(D)} (R_\pm(D))^t \mathcal P_\pm,	
	\end{equation}
where  
\[	e^{it \sqrt {\Lambda} (D)} =\mathrm{diag} \big(e^{it\sqrt{-(\lambda+2\mu)\Delta}}, e^{it\sqrt{-\mu\Delta}}, \ldots, e^{it\sqrt{-\mu\Delta}} \big).	\]

Let $\mathcal P_0$ be a projection defined by $\widehat{\mathcal P_0f}(\xi)=\beta(|\xi|)\widehat f(\xi)$ where $\beta$ is a smooth function supported in the interval $(1/4,4)$ such that $0\le\beta\le 1$ and  $\beta(t)=1$ if $t\in[1/2,2]$. As the functions 
\[	\xi\mapsto \beta(|\xi|)\rho_\pm(\xi/|\xi|)\varphi_\pm(\xi/|\xi|), \quad  \xi\mapsto (\rho_\pm(\xi/|\xi|))^t \varphi_\pm(\xi/|\xi|)\beta(|\xi|)	\]
 are smooth and compactly supported we have the uniform bounds
\begin{equation}\label{e:smcp}
	\|\mathcal P_0 \mathcal R_\pm(D)\mathcal P_\pm h\|_{L^q}\lesssim \|h\|_{L^p}, \quad \| (\mathcal R_\pm(D))^t\mathcal P_\pm \mathcal P_0 h\|_{L^q}\lesssim \|h\|_{L^p}
\end{equation}
for any $1\le p\le q\le \infty$. Therefore, if $\widehat f$ is supported in the annulus $\{\xi\colon 1/2\le |\xi|\le 2 \}$, it follows from \eqref{dia} and \eqref{e:smcp} that 
\begin{align*}
	\|e^{it\sqrt{-\Delta^\ast}} \mathcal P_0f\|_{L^\infty(\mathbb R^n)}
	&\le \sum_\pm \| \mathcal P_0 R_\pm (D) e^{it\sqrt{\Lambda}(D)} (R_\pm(D))^t \mathcal P_\pm \mathcal P_0 f\|_{L^\infty(\mathbb R^n)}\\
	&\lesssim \sum_\pm  \|e^{it\sqrt {\Lambda}(D)} (R_\pm(D))^t \mathcal P_\pm \mathcal P_0 f\|_{L^\infty(\mathbb R^n)}.
\end{align*}
By the well-known dispersive estimate for $e^{it\sqrt{-\Delta}}$ and \eqref{e:smcp}, this is estimated by
\[	C\sum_\pm |t|^{-\frac{n-1}2} \|(R_\pm(D))^t\mathcal P_\pm \mathcal P_0 f\|_{L^1(\mathbb R^n)} \lesssim |t|^{-\frac{n-1}2} \|f\|_{L^1(\mathbb R^n)},	\]
and the proof of Theorem \ref{t:dispersive} is complete.

On the other hand, by Lemma \ref{l:norm_equi} and the Plancherel theorem it is easy to see that 
\begin{equation}\label{e:unitary}
	\|e^{it\sqrt{-\Delta^\ast}} f\|_{L^2(\mathbb R^n)} 
	\le \sum_\pm \|R_\pm (D) e^{it\sqrt{\Lambda}(D)} (R_\pm(D))^t \mathcal P_\pm f\|_{L^2(\mathbb R^n)}\lesssim \|f\|_{L^2(\mathbb R^n)}.
\end{equation}
Therefore, by the well-known theorem of Keel--Tao in \cite{MR1646048}, combining the  $L^2$ estimate and the dispersive estimate \eqref{e:dispers} implies Theorem \ref{t:strichartz}.

\subsection{Further results on the inhomogeneous Strichartz estimates}\label{sec:inhomo}
As was mentioned earlier in Remark \ref{r:inhomo}, if $f=g=0$ then the estimates \eqref{e:strichartz} are available for a wider range of the Lebesgue exponent pairs $(q,r)$ and $(\tilde q , \tilde r)$.  

For the classical wave equation \eqref{e:wave}, this phenomenon has been observed by Harmse \cite{MR1052018} and Oberlin \cite{MR1046747} for the diagonal case $q=r$ and $\tilde q=\tilde r$. Later, Foschi \cite{MR2134950} followed the scheme of Keel--Tao \cite{MR1646048} and obtained the inhomogeneous estimates for the currently known widest range  with $q\neq r$ and $\tilde q\neq\tilde r$.\footnote{For related results on the Schr\"odinger equation we refer to \cite{MR2276614, MR2684466, MR3504022}.} Furthermore, Taggart \cite{MR2719758} obtained more estimates involving the Besov spaces. More recently, Bez, Cunanan and the third author \cite{MR4052201} obtained certain weak type (in temporal variable) estimates in borderline cases.  All of these results are essentially based on the dispersive estimate  
\[	\|e^{it\sqrt{-\Delta}}\mathcal P_0f\|_{L^\infty(\mathbb R^n)} \lesssim |t|^{-\frac{n-1}2}\|f\|_{L^1(\mathbb R^n)}.	\]

In this direction, analogous results are also available for the elastic wave equation \eqref{e:elastic} as we now have the dispersive estimate \eqref{e:dispers}. To facilitate the description let us say that $(q,r)$ is \emph{wave-acceptable} if
\[	1 \leq q < \infty,\quad 2\leq r \leq \infty,\quad \frac{1}{q}<(n-1)\Big(\frac{1}{2}-\frac{1}{r}\Big), \quad \text{or}\quad (q,r)=(\infty,2).	\]
Applying the result of Foschi \cite[Theorem 1.4]{MR2134950} combined with the $L^2$ estimate \eqref{e:unitary} and the dispersive estimate \eqref{e:dispers} we have the following.
\begin{thm} \label{exinho}
Let $(q,r)$ and $(\tilde q , \tilde r)$ be wave-acceptable and suppose that $r,\tilde r <\infty$\footnote{In distinction to the statement of \cite[Theorem 1.4]{MR2134950}, the condition $r,\tilde r<\infty$ is necessary in all dimensions since we have the frequency-localized dispersive estimate \eqref{e:dispers} and need to use the Littlewood--Paley inequalities to obtain global estimates.} and
\begin{equation} \label{gap}
	\frac{1}{q}+\frac{1}{\tilde q} = \frac{n-1}{2} \Big(1-\frac{1}{r}-\frac{1}{\tilde r}\Big).
	\end{equation}
If $n>3$ we further assume that 
\[	\begin{cases}
	\frac{n-3}{r} \leq \frac{n-1}{\tilde r} , \ \ \frac{n-3}{\tilde r} \leq \frac{n-1}{r}  &\text{when} \quad \frac{1}{q}+\frac{1}{\tilde q} <1,  \\ 
	\frac{n-3}{r} < \frac{n-1}{\tilde r} , \ \ \frac{n-3}{\tilde r} < \frac{n-1}{r}, \ \ \frac{1}{r}\leq \frac{1}{q}, \ \ \frac{1}{\tilde r} \leq \frac{1}{\tilde q} &\text{when} \quad \frac{1}{q}+\frac{1}{\tilde q} =1.
\end{cases}	\]
Then, we have 
\begin{equation}\label{e:inhostr}
	\bigg\|\int_0^t \sin \big((t-s) \sqrt{-\Delta^\ast} \big) \sqrt{-\Delta^\ast}^{-1} F(s,\cdot) ds\bigg\|_{L_t^qL_x^r(\mathbb R\times \mathbb R^n)} \lesssim \|F\|_{L_t^{\tilde q'}L_x^{\tilde r'}(\mathbb R\times \mathbb R^n)}.
\end{equation}
\end{thm}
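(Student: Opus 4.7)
The plan is to invoke Foschi's abstract inhomogeneous Strichartz theorem \cite[Theorem 1.4]{MR2134950} at the unit-frequency level and then globalize via scaling and Littlewood--Paley, exactly parallel to the derivation of Theorem \ref{t:strichartz} carried out above.

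First I would verify Foschi's two abstract hypotheses for the frequency-localized propagator $U(t):=e^{it\sqrt{-\Delta^\ast}}\mathcal P_0$. The $L^2$ unitarity $\|U(t)f\|_{L^2}\lesssim\|f\|_{L^2}$ is recorded in \eqref{e:unitary}, and the dispersive bound $\|U(t)U(s)^\ast f\|_{L^\infty}\lesssim |t-s|^{-(n-1)/2}\|f\|_{L^1}$ follows from \eqref{e:dispers} together with the nesting of a second smooth frequency cutoff on the $L^1$ side. Foschi's theorem then supplies the frequency-localized inhomogeneous estimate
\begin{equation*}
	\Bigl\|\int_0^t U(t-s)\mathcal P_0 F(s)\,ds\Bigr\|_{L_t^q L_x^r} \lesssim \|F\|_{L_t^{\tilde q'}L_x^{\tilde r'}}
\end{equation*}
under exactly the wave-acceptable, gap, and (when $n>3$) dimensional conditions of the theorem. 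Writing $\sin(t\sqrt{-\Delta^\ast})(\sqrt{-\Delta^\ast})^{-1}$ as a linear combination of $e^{\pm it\sqrt{-\Delta^\ast}}(\sqrt{-\Delta^\ast})^{-1}$ and applying this localized estimate to both signs yields the unit-frequency version of \eqref{e:inhostr}. I would then transfer this bound to each dyadic annulus $\{|\xi|\sim 2^k\}$ by the standard rescaling $(t,x)\mapsto (2^{-k}t, 2^{-k}x)$; the matching of scaling exponents across the two sides is precisely what the gap condition \eqref{gap} encodes. Finally, the Littlewood--Paley square-function inequality, available since $r,\tilde r<\infty$, sums the dyadic pieces to give \eqref{e:inhostr}; this is also why the condition $r,\tilde r<\infty$ is imposed in all dimensions, as noted in the footnote to the statement.

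The one point that requires attention, rather than being a genuine obstacle, is that Foschi's bilinear-interpolation argument is classically set up for scalar functions whereas our $U(t)$ is vector-valued. This creates no real difficulty because the norms in \eqref{e:def1} are defined componentwise: the bilinear form $\langle U(t-s)\mathcal P_0 F(s), G(t)\rangle$ admits Foschi's $TT^\ast$ and Hardy--Littlewood--Sobolev splitting with the vector-valued dispersive and $L^2$ inputs unchanged. Alternatively, if one prefers a strictly scalar application of \cite[Theorem 1.4]{MR2134950}, one may first apply the diagonalization \eqref{dia} to reduce the estimate to two scalar wave equations at speeds $\sqrt{\lambda+2\mu}$ and $\sqrt{\mu}$, peeling off the rotations and projections via Lemma \ref{l:norm_equi} and the cutoff bounds \eqref{e:smcp}, which are uniformly bounded on $L^r$ and $L^{\tilde r'}$ because $1<\tilde r',r<\infty$. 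Either route is essentially mechanical once the two ingredients \eqref{e:unitary} and \eqref{e:dispers} are in hand.
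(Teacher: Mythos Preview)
Your proposal is correct and matches the paper's approach essentially line for line: the paper derives Theorem \ref{exinho} by feeding the frequency-localized dispersive estimate \eqref{e:dispers} and the $L^2$ bound \eqref{e:unitary} into Foschi's abstract framework \cite[Theorem 1.4]{MR2134950}, with the Littlewood--Paley summation (hence the requirement $r,\tilde r<\infty$) supplying the global estimate. Your ``alternative'' route via the diagonalization \eqref{dia} and Lemma \ref{l:norm_equi} is likewise exactly the second proof the paper gives in Section 4.
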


\begin{rem}
	If $(1/q^\ast,1/r^\ast)$ is the midpoint between the points $(1/q,1/r)$ and $(1/\tilde q , 1/\tilde r )$ in the theorem, then it is a sharp wave-admissible pair.
		This fact follows from the gap condition \eqref{gap} since $1/q+1/\tilde q \leq 1$.
\end{rem}

\section{An alternative proofs of Theorems \ref{t:strichartz} and \ref{exinho}}
It would be interesting to notice that the diagonalization argument provides another proofs of Theorems \ref{t:strichartz} and \ref{exinho} without passing through the dispersive estimate \eqref{e:dispers}.  

Let us first consider the homogeneous part of \eqref{e:elastic} 
\begin{equation}\label{e:elastic_homo}
\begin{cases}
	(\partial_t^2 -\Delta^*) u(t,x) = 0 , \\
	u(0,x)=f(x), \quad \partial_t u(0,x)=g(x),
\end{cases}
\end{equation}
and prove 
\begin{equation}\label{e:homo}
	\|u\|_{L_t^qL_x^r(\mathbb R\times \mathbb R^n)} \lesssim \|f\|_{\dot H^s(\mathbb R^n)}+\|g\|_{\dot H^{s-1}(\mathbb R^n)} 
\end{equation}
for $q$, $r$ and $s$ given as in Theorem \ref{t:strichartz}.  

We break \eqref{e:elastic_homo} into 
\begin{equation*}
\begin{cases}
	(\partial_t^2 -\Delta^*) u_\pm (t,x) = 0, \\
	u_\pm(0,x)=\mathcal P_\pm f (x), \quad \partial_t u_\pm (0,x)=\mathcal P_\pm g(x).
\end{cases}
\end{equation*}
By the diagonalization \eqref{e:diag2} the solutions $u_\pm$ are written as\footnote{From now on, for notational convenience, we sometimes suppress the frequency variable $\xi$ when doing so does not cause confusion.} 
\begin{equation}\label{e:homorep}
\begin{aligned}
	\widehat{u_\pm} 
	&= \cos(tR_\pm  \sqrt{\Lambda} R^t_\pm) \widehat{\mathcal P_\pm f} +\sin(tR_\pm  \sqrt{\Lambda} R^t_\pm)(R_\pm  \sqrt{\Lambda} R^t_\pm)^{-1}  \widehat{\mathcal P_\pm g}\\
	&= R_\pm \big( \cos(t\sqrt{\Lambda}) R_\pm^t\widehat{\mathcal P_\pm f} + \sin(t \sqrt{\Lambda}) \sqrt{\Lambda}^{-1} R^t_\pm  \widehat{\mathcal P_\pm g} \big),
\end{aligned}
\end{equation}
from which it is easy to see that $\mathrm{supp}\, \widehat{u_\pm} \subset \overline{\mathbb R_\pm^n}$. It is also clear that $u=u_++u_-$. 

Now it is straightforward to prove the estimate \eqref{e:homo}. Indeed, by Lemma \ref{l:norm_equi} and the classical homogeneous Strichartz estimate \cite{MR512086, MR1351643, MR1646048} we deduce 
\begin{align*}
	&\|u\|_{L_t^qL_x^r(\mathbb R\times \mathbb R^n)}
	\le  \|u_+ \|_{L_t^qL_x^r(\mathbb R\times \mathbb R^n)} + \|u_- \|_{L_t^qL_x^r(\mathbb R\times \mathbb R^n)}\\
	&\lesssim \sum_\pm \big\|\cos(t\sqrt{\Lambda}(D) ) (R_\pm (D))^t \mathcal P_\pm f +  \sin(t \sqrt{\Lambda}(D)) \sqrt{\Lambda}^{-1}(D) (R_\pm(D))^t  \mathcal P_\pm g \big\|_{L_t^qL_x^r(\mathbb R\times \mathbb R^n)} \\
	&\lesssim \sum_\pm \|(R_\pm (D))^t \mathcal P_\pm f \|_{\dot H^s(\mathbb R^n)} + \|(R_\pm (D))^t \mathcal P_\pm g\|_{\dot H^{s-1}(\mathbb R^n)} \\
	&\lesssim \| f \|_{\dot H^s(\mathbb R^n)} +\|g\|_{\dot H^{s-1}(\mathbb R^n)}.
\end{align*}
The last inequality follows from Lemma \ref{l:norm_equi} since $|\xi|^{s}I_n$ (i.e., the multiplier of $|\nabla|^{s}$ acting on $n$-dimensional vector-valued functions) commutes with all matrices. 

It remains to consider the inhomogeneous part of  \eqref{e:elastic}
\begin{equation}\label{e:elastic_inho0}
\begin{cases}
	(\partial_t^2 -\Delta^*) u(t,x) = F(t,x), \\
	u(0,x)=0, \quad \partial_t u(0,x)=0,
\end{cases}
\end{equation}
and prove the inhomogeneous estimates \eqref{e:inhostr}.  The strategy is similar to the homogeneous part. As before we break \eqref{e:elastic_inho0} into 
\begin{equation*}
\begin{cases}
	(\partial_t^2 -\Delta^*) u_\pm (t,x) = \mathcal P_\pm F (t,x), \\
	u_\pm(0,x)=0, \quad \partial_t u_\pm (0,x)=0.
\end{cases}
\end{equation*}
By the diagonalization \eqref{e:diag2} and Duhamel's formula we have 
\begin{equation}\label{e:inhoho}
\begin{aligned}
	\widehat {u_\pm}(t)
	&= \int_0^t \sin\big((t-s)R_\pm  \sqrt{\Lambda} R^t_\pm \big)(R_\pm  \sqrt{\Lambda} R^t_\pm)^{-1} \widehat {\mathcal P_\pm F}(s)ds \\
	&= R_\pm \int_0^t \sin\big((t-s) \sqrt{\Lambda}  \big) \sqrt{\Lambda}^{-1} R^t_\pm \widehat {\mathcal P_\pm F}(s)ds,	
\end{aligned}
\end{equation}
so it is clear that $\mathrm{supp}\, \widehat{u_\pm} \subset \overline{\mathbb R_\pm^n}$ and $u=\sum_\pm u_\pm$. 

By \eqref{e:inhoho}, Lemma \ref{l:norm_equi} and the inhomogeneous Strichartz estimates for the wave equation \cite{MR1052018, MR1046747, MR1646048, MR2134950} we have, for $(q,r)$ and  $(\tilde q, \tilde r)$ given in Theorem \ref{exinho}, that
\begin{align*}
	&\|u\|_{L_t^qL_x^r(\mathbb R\times \mathbb R^n)} \\
	& \lesssim \sum_\pm \bigg\| \int_0^t \sin \big( (t-s) \sqrt{\Lambda}(D) \big) \sqrt{\Lambda}^{-1}(D) (R_\pm (D))^t \mathcal P_\pm  F(s)ds  \bigg\|_{L_t^qL_x^r(\mathbb R\times \mathbb R^n)} \\
	& \lesssim \sum_{\pm} \| (R_\pm (D))^t \mathcal P_\pm   F\|_{L_t^{\tilde q'}L_x^{\tilde r'}(\mathbb R\times \mathbb R^n)} \\
	& \lesssim  \|F\|_{L_t^{\tilde q'}L_x^{\tilde r'}(\mathbb R\times \mathbb R^n)}.
\end{align*}
This completes the proof of the inhomogeneous Strichartz estimate \eqref{e:inhostr}.

\section{Perturbed equations} \label{sec:4}
In this section, we provide a new proof of Theorem \ref{t:str-per} making use of the diagonalization argument rather than using the Helmholtz decomposition.  

By Duhamel's principle, we first write the solution to \eqref{e:perturb} as 
\begin{equation} \label{e:sol}
\begin{aligned}
u (t,x)
	&= \cos( t\sqrt{-\Delta^*})f + \sin (t\sqrt{-\Delta^*})\sqrt{-\Delta^*}^{-1}g \\
	& \quad +  \int_0^t \sin((t-s)\sqrt{-\Delta^*})\sqrt{-\Delta^*}^{-1} \big[Vu(s,\cdot)\big] ds.
\end{aligned}
\end{equation}
The estimate \eqref{e:homo} gives the following \emph{a priori} estimate
\[	\big \| \cos(t\sqrt{-\Delta^*})f + \sin(t\sqrt{-\Delta^*})\sqrt{-\Delta^*}^{-1}g \big \|_{L_t^q L_x^r(\mathbb R\times \mathbb R^n)} 
	\lesssim \|f\|_{\dot H^s(\mathbb R^n)} +\|g\|_{\dot H^{s-1}(\mathbb R^n)}	\]
for $q$, $r$ and $s$ as in Theorem \ref{t:strichartz}. In this estimate if we replace $f$ and $g$ with $|\nabla|^{\frac12-s}f$ and $|\nabla|^{\frac12-s}g$, respectively, then (since the multiplier of $|\nabla|^{\frac12-s}$ commutes with all matrices) we obtain
\begin{equation}\label{e:homo1}
	\big \| \cos(t\sqrt{-\Delta^*})f + \sin(t\sqrt{-\Delta^*})\sqrt{-\Delta^*}^{-1}g \big \|_{L_t^q \dot H_r^\sigma} 
	\lesssim \|f\|_{\dot H^{1/2}} +\|g\|_{\dot H^{-1/2}} 
\end{equation}
with $\sigma=\frac12-s=\frac1q+\frac nr-\frac{n-1}2$. Hence, for the Duhamel part, we will show 
\begin{equation} \label{e:inhomo}
	\bigg\|\int_{0}^{t} \sin((t-s)\sqrt{-\Delta^*})\sqrt{-\Delta^*}^{-1} \big[Vu(s,\cdot)\big] ds \bigg\|_{L_t^q \dot H_r^\sigma} \lesssim \|V\|_{\mathcal F^p}^{1/2} \|u\|_{L^2_{x,t}(|V|)}
\end{equation}
and
\begin{equation} \label{e:weighted}
	\|u\|_{L^2_{x,t}(|V|)} \lesssim \|V\|_{\mathcal F^p}^{1/2}\big(\|f\|_{\dot H^{1/2}} + \|g\|_{\dot H^{-1/2}}\big),
\end{equation}
which are sufficient to prove Theorem \ref{t:str-per}. 

In order to prove the estimates \eqref{e:inhomo} and \eqref{e:weighted} we need the following weighted $L^2$ inequalities.
\begin{prop} \label{p:weighted}
Let $n \ge 3$ and $V$ be as in Theorem \ref{t:str-per}. Then we have
\begin{gather}
\label{p:cos}
	\|\cos (t\sqrt{-\Delta^*})f\|_{L_{x,t}^2 (|V|)} \lesssim \|V\|_{\mathcal F^p}^{1/2} \|f\|_{\dot H^{1/2}}, \\
\label{p:sin}
	\|\sin (t\sqrt{-\Delta^*})\sqrt{-\Delta^*}^{-1}g\|_{L_{x,t}^2 (|V|)} \lesssim \|V\|_{\mathcal F^p}^{1/2} \|g\|_{\dot H^{-1/2}}
\end{gather}
and 
\begin{equation} \label{p:inho}
	\bigg\| \int_0^t \sin((t-s)\sqrt{-\Delta^*})\sqrt{-\Delta^*}^{-1} F(s,\cdot)ds\bigg\|_{L^2_{x,t}(|V|)} \lesssim \|V\|_{\mathcal F^p} \|F\|_{L^2_{x,t}(|V|^{-1})}.
\end{equation}
\end{prop}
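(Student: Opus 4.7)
The plan is to use the diagonalization \eqref{e:diag2} to reduce each of the three estimates to its analogue for the classical scalar wave equation, relying on the scalar weighted $L^2$ (Kato smoothing) estimate: for any $c>0$, any scalar $h$, and any $V\in\mathcal F^p$ with $p>(n-1)/2$,
$$
\|\cos(tc|D|)h\|_{L^2_{t,x}(|V|)} \lesssim \|V\|_{\mathcal F^p}^{1/2}\|h\|_{\dot H^{1/2}},
$$
together with its $\sin$-analogue and the bilinear ($TT^\ast$) version controlling the Duhamel term. These are well-established Kato smoothing estimates for the Fefferman--Phong class (see e.g.\ \cite{MR2106340}).

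For \eqref{p:cos}, I would start from \eqref{dia} and Euler's formula to write $\cos(t\sqrt{-\Delta^\ast})f = \sum_\pm R_\pm(D)\cos(t\sqrt{\Lambda}(D))(R_\pm(D))^t\mathcal P_\pm f$. Since $\cos(t\sqrt{\Lambda}(D))$ is diagonal with scalar entries $\cos(tc_j|D|)$ for $c_j\in\{\sqrt{\lambda+2\mu},\sqrt{\mu}\}$, the $j$-th component of $\cos(t\sqrt{-\Delta^\ast})f$ is a finite sum of terms of the form $r^\pm_{jk}(D)\cos(tc_k|D|)[(R_\pm(D))^t\mathcal P_\pm f]_k$. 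The key observation is that $r^\pm_{jk}(D)$ and $\cos(tc_k|D|)$ are both \emph{scalar} Fourier multipliers and thus commute, so such a term equals $\cos(tc_k|D|)\bigl(r^\pm_{jk}(D)[(R_\pm(D))^t\mathcal P_\pm f]_k\bigr)$. Applying the scalar estimate above with $h:=r^\pm_{jk}(D)[(R_\pm(D))^t\mathcal P_\pm f]_k$, and invoking Mikhlin's theorem via Lemma \ref{l:mikhlin} together with Lemma \ref{l:norm_equi} to bound $\|h\|_{\dot H^{1/2}}\lesssim\|f\|_{\dot H^{1/2}}$, then summing over the finitely many indices $\pm,j,k$, yields \eqref{p:cos}. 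The estimate \eqref{p:sin} follows identically with $\sin$ replacing $\cos$; the extra factor $\sqrt{-\Delta^\ast}^{-1}$ is absorbed by the diagonal $\sqrt{\Lambda}(D)^{-1}=\mathrm{diag}(c_1^{-1}|D|^{-1},\ldots,c_n^{-1}|D|^{-1})$, which shifts the data norm from $\dot H^{1/2}$ to $\dot H^{-1/2}$.

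For the inhomogeneous estimate \eqref{p:inho}, the same diagonalization applied to the Duhamel formula gives
$$
\int_0^t\sin((t-s)\sqrt{-\Delta^\ast})\sqrt{-\Delta^\ast}^{-1}F(s)\,ds = \sum_\pm R_\pm(D)\int_0^t\sin((t-s)\sqrt{\Lambda}(D))\sqrt{\Lambda}(D)^{-1}(R_\pm(D))^t\mathcal P_\pm F(s)\,ds.
$$
Expanding componentwise and sliding each scalar multiplier $r^\pm_{jk}(D)$ past the scalar propagator reduces matters to the bilinear scalar estimate
$$
\Bigl\|\int_0^t\sin((t-s)c|D|)|D|^{-1}G(s)\,ds\Bigr\|_{L^2_{t,x}(|V|)}\lesssim\|V\|_{\mathcal F^p}\|G\|_{L^2_{t,x}(|V|^{-1})}
$$
for scalar $G$, which is the $TT^\ast$ form of the scalar Kato smoothing (the restriction to $\int_0^t$ being recovered by Christ--Kiselev if needed).

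The conceptual obstacle one might fear is the interaction of the outer rotations $R_\pm(D)$ with the weighted $L^2(|V|)$ norm, since Fefferman--Phong weights need not lie in $A_2$ and Mikhlin multipliers are not automatically bounded on $L^2(|V|)$. The essential simplification afforded by the diagonalization is precisely that each $r^\pm_{jk}(D)$ is a \emph{scalar} Fourier multiplier and so commutes with the scalar propagator; this lets one slide every Mikhlin multiplier past the wave operator so that it acts on the initial data, where only unweighted Sobolev boundedness (via Lemma \ref{l:mikhlin} and Mikhlin's theorem) is needed.
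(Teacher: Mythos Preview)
Your commutation trick for \eqref{p:cos} and \eqref{p:sin} is correct and in fact slightly cleaner than the paper's argument: by sliding each scalar multiplier $r^\pm_{jk}(D)$ past $\cos(tc_k|D|)$ you push all the rotations onto the data side, where they are measured in the unweighted $\dot H^{\pm 1/2}$ norm and Mikhlin's theorem applies directly. The paper instead replaces $|V|$ by the $A_1$ majorant $w=(\mathcal M(|V|^a))^{1/a}$ and invokes weighted Mikhlin boundedness on $L^2(w)$; your route avoids that step for the homogeneous estimates.

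However, your argument for \eqref{p:inho} has a genuine gap. After commuting, the forcing becomes $G_k = r^\pm_{jk}(D)\big[(R_\pm(D))^t\mathcal P_\pm F\big]_k$, and to conclude you need $\|G_k\|_{L^2_{t,x}(|V|^{-1})}\lesssim \|F\|_{L^2_{t,x}(|V|^{-1})}$. This is a Mikhlin bound on the \emph{weighted} space $L^2(|V|^{-1})$, not an unweighted one: the ``initial data'' here is $F$, and it is measured in a weighted norm. Your final paragraph asserts that commuting reduces everything to unweighted Sobolev bounds, but that is only true for the homogeneous pieces. Since $|V|$ (hence $|V|^{-1}$) need not be an $A_2$ weight for $V\in\mathcal F^p$, this step fails in general. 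The paper's remedy is precisely to pass to the $A_1$ majorant $w$: then $w,w^{-1}\in A_2$, Mikhlin multipliers are bounded on $L^2(w^{-1})$, the scalar Ruiz--Vega estimate applies with the $\mathcal F^p$ weight $w$ (since $\|w\|_{\mathcal F^p}\lesssim\|V\|_{\mathcal F^p}$), and one returns to $|V|$ via $|V|\le w$ and $w^{-1}\le |V|^{-1}$.

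A second issue: your remark that the retarded integral $\int_0^t$ can be recovered ``by Christ--Kiselev if needed'' does not work here, since both the input and output time exponents are $2$ and the Christ--Kiselev lemma requires a strict inequality between them. The scalar retarded estimate must be quoted directly (as in Ruiz--Vega \cite{MR1309336}, Proposition~4.2), not derived from the non-retarded one.
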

Let us hold off the proof of the proposition for the moment and first prove the estimates \eqref{e:inhomo} and \eqref{e:weighted}.

\begin{proof}[Proofs of \eqref{e:inhomo} and \eqref{e:weighted}]
Applying Proposition \ref{p:weighted} (with $F = V u$) to \eqref{e:sol}, we see that
\begin{equation*}
	\|u\|_{L^2_{x,t}(|V|)} \lesssim \|V\|_{\mathcal F^p}^{1/2}\big(\|f\|_{\dot H^{1/2}} + \|g\|_{\dot H^{-1/2}} \big) + \|V\|_{\mathcal F^p} \|u\|_{L^2_{x,t}(|V|)}.
\end{equation*}
Since $\|V\|_{\mathcal F^p}$ is small we obtain the estimate \eqref{e:weighted}.

The other estimate \eqref{e:inhomo} follows from 
\begin{equation} \label{bfCK}
	\bigg\|\int_{-\infty}^{\infty} \sin((t-s)\sqrt{-\Delta^*})\sqrt{-\Delta^*}^{-1} \big[Vu(s,\cdot)\big] ds \bigg\|_{L_t^q \dot H_r^\sigma} \lesssim \|V\|_{\mathcal F^p}^{1/2} \|u\|_{L^2_{x,t}(|V|)}
\end{equation}
by the Christ--Kiselev lemma (see \cite{MR1809116}). Furthermore, by the formula $\sin(\alpha-\beta)=\sin \alpha \cos \beta - \cos \alpha \sin \beta$, it is enough to show \eqref{bfCK} with $\sin(t\sqrt{-\Delta^*}) \cos(s\sqrt{-\Delta^*})$ and $\cos (t\sqrt{-\Delta^*})\sin(s\sqrt{-\Delta^*})$ in place of $\sin((t-s)\sqrt{-\Delta^*})$.

Making use of the estimates \eqref{e:homo1} and the dual form of \eqref{p:cos}, we see that
\begin{align*}
	&\bigg\| \sin(t\sqrt{-\Delta^*}){\sqrt{-\Delta^*}}^{-1} \int_{-\infty}^{\infty}\cos(s\sqrt{-\Delta^*}) \big[Vu(s,\cdot)\big]  ds \bigg\|_{L_t^q \dot H_r^{\sigma}} \\
	&\lesssim \bigg\| \int_{-\infty}^{\infty}\cos(s\sqrt{-\Delta^*}) \big[Vu(s,\cdot)\big]  ds \bigg\|_{\dot H^{-1/2}}
	\lesssim \|V\|_{\mathcal F^p}^{1/2} \|u\|_{L^2_{x,t}(|V|)}.
\end{align*}
Similarly, from \eqref{e:homo1} and the dual to estimate \eqref{p:sin} we deduce
\begin{align*}
	&\bigg\| \cos(t \sqrt{-\Delta^*}) \int_{-\infty}^{\infty} \sin(s\sqrt{-\Delta^*}){\sqrt{-\Delta^*}}^{-1} \big[Vu(s,\cdot)\big]  ds \bigg\|_{L_t^q \dot H_r^{\sigma}} \\
	&\lesssim \bigg\| \int_{-\infty}^{\infty}\sin(s\sqrt{-\Delta^*}){\sqrt{-\Delta^*}}^{-1} \big[Vu(s,\cdot)\big]  ds \bigg\|_{\dot H^{1/2}} \lesssim  \|V\|_{\mathcal F^p}^{1/2} \|u\|_{L^2_{x,t}(|V|)}.
\end{align*}
Thus the proof of \eqref{e:inhomo} is complete.
\end{proof}

In the rest of this section we prove Proposition \ref{p:weighted}. For the purpose we make use of weighted $L^2$ boundedness for Mikhlin multipliers and maximal functions involving the Muckenhoupt $A_q$ weights. We first recall the following (see e.g., \cite[Theorem 7.21]{MR3052498}). 

\begin{lem}\label{l:we-mul} 
Let $1<q<\infty$ and $w\in A_q$. If $m$ is a smooth function defined on $\mathbb R^n\setminus\{0\}$ satisfying 
\begin{equation*}
	|\partial_\xi^\alpha m(\xi)| \lesssim |\xi|^{-|\alpha|}
\end{equation*}
for all multi-indices $\alpha\in \mathbb N_0^n$, then we have 
\[	\|m(D)f\|_{L^q(w)} \lesssim \|f\|_{L^q(w)}.	\]
\end{lem}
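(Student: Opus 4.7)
The plan is to recognize $m(D)$ as a Calder\'on--Zygmund operator and then appeal to the classical weighted norm inequalities of Coifman--Fefferman for such operators. The Mikhlin condition with $\alpha=0$ gives $m\in L^\infty$, so Plancherel's theorem yields the unweighted $L^2$ bound immediately; what remains is the pointwise analysis of the convolution kernel $K=\mathcal F^{-1}m$.

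To handle the kernel I would carry out a Littlewood--Paley decomposition of the multiplier. Fix a smooth bump $\phi$ supported in $\{1/2\le|\xi|\le 2\}$ with $\sum_{j\in\mathbb Z}\phi(2^{-j}\xi)=1$ on $\mathbb R^n\setminus\{0\}$, and write $K=\sum_j K_j$ with $K_j=\mathcal F^{-1}(m\,\phi(2^{-j}\cdot))$. Repeated integration by parts in the inverse Fourier transform, together with the Mikhlin bounds on $m$ and its derivatives, yields $|\partial^\beta K_j(x)|\lesssim 2^{j(n+|\beta|)}(1+2^j|x|)^{-N}$ for every $N$. Summing the resulting geometric series in $j$ produces the standard Calder\'on--Zygmund kernel estimates $|K(x)|\lesssim |x|^{-n}$ and $|\nabla K(x)|\lesssim |x|^{-n-1}$ away from the origin, which in particular imply H\"ormander's cancellation condition.

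At that stage the hypotheses of the Coifman--Fefferman theorem are satisfied, so $\|m(D)f\|_{L^q(w)}\lesssim \|f\|_{L^q(w)}$ for $w\in A_q$, $1<q<\infty$. A slightly more hands-on route proceeds via the sharp maximal function inequality $M^{\#}(m(D)f)\lesssim M(|f|^r)^{1/r}$, valid for any $r>1$ whenever $m(D)$ is Calder\'on--Zygmund, combined with the Fefferman--Stein inequality and the $A_{q/r}$ boundedness of the Hardy--Littlewood maximal function, choosing $r>1$ small enough that $w\in A_{q/r}$. There is no serious obstacle since the result is classical; the only delicate point is the dyadic kernel analysis, where the full Mikhlin hypothesis on all multi-indices is essential. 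The vector-valued version implicitly required by the paper's applications reduces to this scalar statement componentwise via the definition \eqref{e:def1}.
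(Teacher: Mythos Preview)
Your argument is correct and follows the standard route to weighted boundedness for Mikhlin multipliers: verify the Calder\'on--Zygmund kernel estimates via a dyadic decomposition of $m$, then invoke Coifman--Fefferman (or the $M^\#$/Fefferman--Stein variant). One minor remark: the full Mikhlin hypothesis on \emph{all} multi-indices is not actually essential for the kernel analysis---derivatives up to order roughly $n+1$ suffice---but since the lemma assumes the full condition anyway this does no harm.

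The paper, however, does not prove this lemma at all. It is stated as a recalled fact with a citation to \cite[Theorem 7.21]{MR3052498} (Muscalu--Schlag), and no argument is supplied. So your proposal goes strictly beyond what the paper does: you outline the proof that the reference contains, whereas the paper simply imports the result as a black box. Both are legitimate; your write-up has the advantage of being self-contained, while the paper's approach keeps the exposition focused on the novel diagonalization argument and avoids rehearsing classical Calder\'on--Zygmund theory.
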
	

Let us then recall some useful facts on the Hardy--Littlewood maximal operator $\mathcal M$ on the class $\mathcal F^p$ (see \cite[Lemma 1]{MR1027825}): If  $V \in \mathcal{F}^{p}$ then for any $1< a < p$
\[	w=(\mathcal M(|V|^a))^{1/a} \in A_1 \cap \mathcal{F}^{p}	\]
and
\begin{equation} \label{e:WV}
\|w\|_{\mathcal{F}^p} \lesssim \|V\|_{\mathcal{F}^p}.
\end{equation}
It is also clear that $|V|\le w$ almost everywhere.  In particular, $w\in A_2$. 

\begin{proof}[Proof of Proposition \ref{p:weighted}]
Let us first prove \eqref{p:cos}. Setting $w=(\mathcal M(|V|^a))^{1/a}\in A_2$ and using Lemma \ref{l:we-mul}, we have 
\begin{equation} \label{l:weimik2}
	\|R_\pm(D)\mathcal P_\pm f\|_{L^2(w)} \approx \|\mathcal P_\pm f\|_{L^2(w)} \lesssim \|f\|_{L^2(w)}.
\end{equation}	 
Since $|V|\le w$, by using \eqref{e:homorep} with $g=0$ and \eqref{l:weimik2}, we get
\begin{align*}
	\|\cos(t\sqrt{-\Delta^\ast}) f\|_{L_{x,t}^2(|V|)}
	&\le \sum_\pm \|R_\pm(D) \cos (t\sqrt{\Lambda}(D)) (R_\pm(D))^t \mathcal P_\pm f\|_{L_{x,t}^2(w)} \\
	&\lesssim \sum_\pm \|\cos (t\sqrt{\Lambda}(D)) (R_\pm(D))^t \mathcal P_\pm f\|_{L_{x,t}^2(w)} .
\end{align*}
Now we use the analog \cite[(2.11)]{MR1309336} of \eqref{p:cos} (with $\Delta^\ast$ replaced by $\Delta$),  \eqref{e:norm_equi} and \eqref{e:WV} to estimate this by
\[	C\|w\|_{\mathcal F^p}^{1/2} \sum_\pm \|(R_\pm(D))^t \mathcal P_\pm f\|_{\dot H^{1/2}} 
	\lesssim \|V\|_{\mathcal{F}^p}^{1/2} \|f\|_{\dot H^{1/2}}.	\]
Hence, we obtain \eqref{p:cos}. The proof of  \eqref{p:sin} is similar, so  we shall omit it. 

To show \eqref{p:inho} we make use of  \eqref{e:inhoho} and \eqref{l:weimik2} to see that
\begin{align*}
	&\bigg\| \int_0^t \sin ((t-s)\sqrt{-\Delta^\ast}) \sqrt{-\Delta^\ast}^{-1} F(s,\cdot) ds \bigg\|_{L_{x,t}^2(w)} \\
	&\lesssim \sum_\pm \bigg\| \int_0^t \sin ((t-s)\sqrt{\Lambda}(D) ) \sqrt{\Lambda}^{-1}(D) (R_\pm(D))^t \mathcal P_\pm F(s,\cdot) ds \bigg\|_{L_{x,t}^2(w)} .
\end{align*}
We recall the wave equation analog \cite[Proposition 4.2]{MR1309336} of \eqref{p:inho} and utilize \eqref{e:WV} and \eqref{l:weimik2} to dominate this by 
\[	C \|w\|_{\mathcal F^p} \sum_\pm \| (R_\pm(D))^t \mathcal P_\pm F(t,\cdot)  \|_{L_{x,t}^2(w^{-1})} \lesssim \|V\|_{\mathcal F^p} \|F\|_{L_{x,t}^2(w^{-1})}.	\]
Since $|V|\le w$ the estimate \eqref{p:inho} follows. The proof of Proposition \ref{p:weighted} is complete.
\end{proof}

Finally, we present some difficult aspects and open problems related to the endpoint issue ($q=2$) in Theorem \ref{t:str-per}.
\subsubsection*{Further discussion} 
We discuss the endpoint issue $q=2$. We need the assumption $q>2$ in the proof when we apply the Christ--Kiselev lemma to handle the Duhamel term. One might be motivated to try a simple approach to use (a Lorentz space variant of) the endpoint Strichartz estimate \eqref{e:strichartz} (applied to \eqref{e:sol} with $V(x)=c|x|^{-2}$, $|c| \ll1$) 
\begin{equation}\label{e:easy-argu}
	\|u\|_{L^2_t L^{r,2}_x} \lesssim \|f\|_{\dot H^s} +\|g\|_{\dot H^{s-1}} +\| Vu\|_{L^2_tL^{\tilde r' , 2 }_x},
\end{equation}
for wave-admissible pairs $(2,r)$ and $(2,\tilde r)$ satisfying  
\begin{equation}\label{e:gapp}
	\frac12 + \frac nr=\frac n2 -s =\frac12+\frac n{\tilde r'}-2,
\end{equation}
and argue as the following\footnote{See, for example, \cite[p. 282]{MR2195116} for a similar argument concerning perturbed Schr\"odinger equations.}: If \eqref{e:easy-argu} were true, then from \eqref{e:gapp} combined with O'Neil's inequality (\cite{MR146673}) it follows that
\begin{align*}
\|u\|_{L^2_t L^{r,2}_x} \lesssim \|f\|_{\dot H^s} +\|g\|_{\dot H^{s-1}} + \|V\|_{L^{n/2,\infty}} \|u\|_{L^2_t L^{r,2}_x}
\end{align*} 
and we can ignore the last term since $\|V\|_{L^{n/2,\infty}}\ll 1$. However, unfortunately, such pairs $(2,r)$ and $(2,\tilde r)$ do not exist. 

For the wave equation perturbed by the inverse square potential, Burq et al. \cite{MR2003358} obtained the endpoint case (see Theorem 9 in \cite{MR2003358}). The framework in \cite{MR2003358} does not seem to be accessible in the elastic case because the differential operator $\nabla \mathrm{div}=(\partial^2/\partial x_i \partial x_j)_{1\le i, j\le n}$ has variable coefficients in the spherical coordinate. In fact,   
\[	\frac{\partial^2}{\partial x_i \partial x_j} =a_ia_j\frac{\partial^2}{\partial r^2} + \big(\frac{a_j}{r}A_i+\frac{a_i}{r}A_j-\frac{a_ia_j}{r}\big)\frac{\partial}{\partial r} +\frac1{r^2}A_iA_j,	\]
where $a_j$ and $A_j$ are functions of $\theta$ and $\partial_\theta$ as in the proof of Lemma \ref{l:mikhlin}.  In this regard it would be an interesting open question to ask whether the endpoint estimates hold for the elastic case. 

\section{Uniform Sobolev inequality} \label{sec:uniresol}
In this final section, we prove the uniform Sobolev inequality \eqref{e:res}, which follows from corresponding result on the wave operator in \cite{MR3545933} once we diagonalize the Lam\'e operator $\Delta^\ast$ as in the previous sections. As a corollary, the uniform inequality yields temporal unique continuation for $(\partial_t^2-\Delta^\ast)u=Vu$ whenever $V\in L^{\frac{1+n}2}(\mathbb R^{1+n})$.

\begin{proof}[Proof of Theorem \ref{t:uniresol}]
If we denote by $\mathcal F$ the space-time Fourier transform and $\mathcal F^{-1}$ the inverse of $\mathcal F$, then in terms of Fourier multiplier the inequality \eqref{e:res} is equivalent to 
\[	\left\|\mathcal F^{-1} \left\{ \big( (-\tau^2+ia\tau -z)I_n +L(\xi) \big)^{-1} \mathcal F f (\tau, \xi) \right\} \right\|_{L^q(\mathbb R^{1+n})} 
	\le C \|f\|_{L^p(\mathbb R^{1+n})}.	\]
By \eqref{e:diag} we diagonalize the multiplier and see that 
\begin{align*}
	&\big( (-\tau^2 +ia\tau -z)I_n +L(\xi) \big)^{-1} \mathcal F f (\tau, \xi) \\
	&=\sum_\pm R_\pm(\xi) \big( (-\tau^2 +ia\tau -z)I_n +\Lambda(\xi) \big)^{-1} (R_\pm(\xi))^t \varphi_\pm (\xi/|\xi|) \mathcal F f (\tau, \xi)
\end{align*}
Therefore, by making use of Lemma \ref{l:norm_equi} and the uniform Sobolev inequalities for the wave operator (\cite[Theorem 1.1]{MR3545933} with $d=1+n$) we have the uniform estimate 
\begin{align*}
	&\left\| (\partial_t^2-\Delta^\ast + a\partial_t-z)^{-1} f \right\|_{L^q(\mathbb R^{1+n})} \\
	&=\left\|\mathcal F^{-1} \left\{ \big( (-\tau^2+ia\tau -z)I_n +L(\xi) \big)^{-1} \mathcal F f (\tau, \xi) \right\} \right\|_{L^q(\mathbb R^{1+n})} \\
	&\lesssim \sum_\pm \left\| \mathcal F^{-1} \left\{ \big( (-\tau^2+ia\tau-z)I_n +\Lambda(\xi) \big)^{-1} (R_\pm(\xi))^t \varphi_\pm (\xi/|\xi|) \mathcal F f (\tau, \xi) \right\} \right\|_{L^q(\mathbb R^{1+n})} \\
	&\lesssim \sum_\pm \left\| \mathcal F^{-1} \left\{ (R_\pm(\xi))^t \varphi_\pm (\xi/|\xi|) \mathcal F f (\tau, \xi) \right\} \right\|_{L^p(\mathbb R^{1+n})} \\
	&\lesssim \|f\|_{L^p(\mathbb R^{1+n})}
\end{align*}
whenever $(p,q)$ lies in the uniform boundedness range \eqref{e:unif-ran}. This completes the proof of the uniform Sobolev inequalities \eqref{e:res}.
\end{proof}

We now turn to show the following form of Carleman inequalities with weights in the temporal variable: If $p,q$ satisfy \eqref{e:unif-ran} we have
\[	\|e^{\nu t}u\|_{L^q(\mathbb R^{1+n})} \le C\|e^{\nu t}(\partial_t^2-\Delta^\ast)u\|_{L^p(\mathbb R^{1+n})},	\]
where $C$ is independent of $\nu\in\mathbb R$.  These inequalities are direct consequences of the uniform Sobolev inequalities \eqref{e:res} since 
\[	e^{\nu t}(\partial_t^2-\Delta^\ast)e^{-\nu t}= \partial_t^2-2\nu\partial_t+\nu^2-\Delta^\ast. 	\]
As a consequence, by the well-known argument in \cite[p. 343]{MR894584} we obtain temporal unique continuation for the differential inequality
\begin{equation}\label{e:diff-ineq}
	|(\partial_t^2-\Delta^\ast) u(t,x)| \le |V(t,x)u(t,x)|.
\end{equation}
\begin{cor}\label{c:uniq}
Let $p$ satisfy the second condition in \eqref{e:unif-ran} and suppose that $V\in L^{\frac{1+n}2}(\mathbb R^{1+n})$ and $u\in W^{2,p}(\mathbb R^{1+n})$. For some $t_0\in \mathbb R$, if the support of $u$ is contained in one side of the hyperplane $\{(t,x)\in\mathbb R^{1+n}\colon t= t_0\}$ and $u$ satisfies \eqref{e:diff-ineq} almost everywhere, then $u \equiv 0$ on $\mathbb R^{1+n}$.
\end{cor}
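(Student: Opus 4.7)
The plan is to apply the standard reduction of unique continuation to the Carleman inequality
\[
\|e^{\nu t}u\|_{L^q(\mathbb R^{1+n})}\le C\|e^{\nu t}(\partial_t^2-\Delta^\ast)u\|_{L^p(\mathbb R^{1+n})}
\]
deduced from Theorem \ref{t:uniresol} by conjugation, with $C$ independent of $\nu\in\mathbb R$. After translation we may assume $t_0=0$, and since $\partial_t^2-\Delta^\ast$ is invariant under $t\mapsto -t$, we may further assume $u$ is supported in $\{t\le 0\}$.

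The core step is to establish that there exists $T_\ast>0$ such that $u\equiv 0$ on the slab $\{-T_\ast<t\le 0\}$. Since $V\in L^{(n+1)/2}(\mathbb R^{1+n})$, the cumulative mass $a\mapsto \int_{-\infty}^{a}\!\int_{\mathbb R^n}|V|^{(n+1)/2}\,dx\,dt$ is uniformly continuous, so $\|V\|_{L^{(n+1)/2}(\{a-T_\ast<t\le a\})}$ can be made uniformly small in $a$ by choosing $T_\ast$ small enough. Thus, once the core step is settled, iterating it with base times $-T_\ast/2,-T_\ast,-3T_\ast/2,\ldots$ covers the full support of $u$ and yields $u\equiv 0$ on $\mathbb R^{1+n}$.

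To carry out the core step, I would apply the Carleman inequality with large $\nu>0$, invoke the differential inequality \eqref{e:diff-ineq}, and split the right-hand side as
\[
\|e^{\nu t}Vu\|_{L^p(\mathbb R^{1+n})}\le \|e^{\nu t}Vu\|_{L^p(\{-T_\ast<t\le 0\})}+\|e^{\nu t}Vu\|_{L^p(\{t\le -T_\ast\})}.
\]
By H\"older with exponents matching $1/p-1/q=2/(n+1)$, the first summand is dominated by $\|V\|_{L^{(n+1)/2}(\{-T_\ast<t\le 0\})}\|e^{\nu t}u\|_{L^q}$, and is absorbed into the left-hand side once $T_\ast$ is chosen to make this slab norm at most $1/(2C)$. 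The second summand is bounded by $e^{-\nu T_\ast}\|V\|_{L^{(n+1)/2}}\|u\|_{L^q}$ via the pointwise bound $e^{\nu t}\le e^{-\nu T_\ast}$ on $\{t\le -T_\ast\}$, where $u\in L^q$ is guaranteed by the Sobolev embedding $W^{2,p}\hookrightarrow L^q$ with the same gap $2/(n+1)$. Restricting the left-hand side to $\{-T_\ast/2<t\le 0\}$, where $e^{\nu t}\ge e^{-\nu T_\ast/2}$, and letting $\nu\to+\infty$ then forces $u\equiv 0$ on that slab.

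The main obstacle is the absorption step, and the whole reason the hypothesis takes the form it does: it relies on the exact matching between the Sobolev gap $1/p-1/q=2/(n+1)$ in \eqref{e:unif-ran} and the critical integrability $L^{(n+1)/2}$ of the potential, which is precisely what lets H\"older's inequality produce a slab norm amenable to absorption. Beyond this the argument is exactly that of \cite[p.~343]{MR894584}; the only ancillary matters are verifying $u\in L^q$ via Sobolev embedding and the uniform smallness of thin-slab $L^{(n+1)/2}$-norms of $V$, both of which follow routinely from the hypotheses.
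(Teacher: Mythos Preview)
Your proposal is correct and follows essentially the same approach as the paper: the paper does not give a detailed proof of Corollary~\ref{c:uniq} but simply invokes the Carleman inequality $\|e^{\nu t}u\|_{L^q}\le C\|e^{\nu t}(\partial_t^2-\Delta^\ast)u\|_{L^p}$ (derived by conjugation from Theorem~\ref{t:uniresol}) and refers to ``the well-known argument in \cite[p.~343]{MR894584},'' which is precisely the slab-absorption and iteration argument you have written out.
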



\end{document}